\documentclass[a4paper,11pt,DIV12]{scrartcl}

\usepackage[utf8]{inputenc}
\usepackage{amsmath}
\usepackage{amssymb}
\usepackage{amsthm}
\usepackage{graphicx}
\usepackage{mathtools}
\usepackage{siunitx}
\usepackage{afterpage}
\usepackage{hyperref}
\usepackage{pgfplots}
\usetikzlibrary{positioning}

\newcommand{\N}{\mathbb{N}}
\newcommand{\Z}{\mathbb{Z}}
\newcommand{\R}{\mathbb{R}}
\newcommand{\C}{\mathbb{C}}
\newcommand{\Cper}{C_{\textnormal{per}}^\infty}
\newcommand{\fib}{\textnormal{fib}}
\newcommand{\Hfib}{H_\fib}
\newcommand{\Dfib}{D_\fib}
\newcommand{\X}{\Omega \times S^2} % chktex 1
\newcommand{\wto}{\rightharpoonup}
\newcommand{\psireg}{\psi^\delta_{\alpha, h}}
\DeclareMathOperator*{\argmin}{argmin}
\DeclareMathOperator*{\vspan}{span}
\DeclareMathOperator{\grad}{grad}
\DeclareMathOperator{\sgrad}{Grad}
\DeclareMathOperator{\Laplace}{\Delta}
\DeclareMathOperator{\domain}{\mathcal{D}}
\DeclareMathOperator{\range}{\mathcal{R}}
\DeclareMathOperator{\order}{\mathcal{O}}
\DeclarePairedDelimiter{\norm}{\lVert}{\rVert}
\DeclarePairedDelimiter{\abs}{\lvert}{\rvert}
\DeclarePairedDelimiterX{\inner}[2]{\langle}{\rangle}{#1,\, #2}
\DeclarePairedDelimiterX{\comm}[2]{[}{]}{#1,\, #2} % chktex 9

\newtheorem{theorem}{Theorem}[section]
\newtheorem{corollary}[theorem]{Corollary}
\newtheorem{lemma}[theorem]{Lemma}
\theoremstyle{definition}
\newtheorem{definition}[theorem]{Definition}

% Only number {equation}s that have a label
\usepackage{etoolbox}
\makeatletter
\newif{\if@label}
\patchcmd\label@in@display{\@empty}{\@empty\@labeltrue}{}{}
\preto\equation{\@labelfalse}
\preto\endequation{\if@label\else\notag\fi}
\makeatother

\begin{document}

\title{A coherence enhancing penalty for Diffusion MRI:\ regularizing property and discrete approximation}

\author{T. Hohage\thanks{Institute for Numerical and Applied Mathematics, University of Göttingen, Germany} \and C. Rügge\footnotemark[1]}

\maketitle

\begin{abstract}
  Processing of Diffusion MRI data obtained from High Angular Resolution measurements consists of a series of steps, starting with the estimation of an orientation distribution function (ODF), which is then used as input for e.g.\ tractography algorithms. It is important that ODF reconstruction methods yield accurate, coherent ODFs, in particular for low SNR or coarsely sampled data sets. As the diffusion process is modelled independently in each voxel, reconstructions are often carried out for each voxel separately, disregarding the observation that neighboring voxels are often quite similar if they belong to the same fiber structure. There are surprisingly few approaches that make use of this kind of spatial regularity to improve coherence and stability of the reconstruction. In this work, we focus on a variation of a method proposed by Reisert and Kiselev based on the concept of fiber continuity. The method has already been shown to yield good numerical results, but has not yet been analyzed theoretically. Under suitable smoothness assumptions, we apply results on constrained Tikhonov-type regularization with approximate operator to show convergence of reconstructions from discrete, noisy data for linear forward models. Further, we numerically illustrate the performance of the method on phantom and \emph{in-vivo} data.
\end{abstract}

\pagestyle{myheadings}
\thispagestyle{plain}
\markboth{T. HOHAGE AND C. RÜGGE}{A COHERENCE ENHANCING PENALTY FOR DIFFUSION MRI}

\section{Introduction}

Diffusion weighted MRI (DW-MRI) is a non-invasive method to measure the movements of water molecules in biological tissue. Using the fact that water diffusion is mainly directed along nerve fibers, not perpendicular to them, it can be used to resolve the fibrous structure of brain white matter, with a wide range of applications in both medicine and neuro-science. DW measurements acquire a number of full 3-dimensional MRI volumes with varying diffusion sensitizing gradients encoding the diffusion in different directions. From this, one tries to infer information on the diffusive properties of the tissue as parametrized by a suitable physical model of the diffusion process. The most widely-used model in this regard, the tensor model, is based on the assumption of anisotropic Gaussian diffusion parametrized by a diffusion tensor in each voxel. While being quite successful, its main drawback is the inherent inability to resolve more than one diffusion direction per voxel. However, depending on the resolution, as much as a third of all voxels can contain multiple fibers.

More refined approaches often replace the diffusion tensor by an \emph{orientation distribution function} (ODF) measuring either --- depending on the underlying model --- the diffusion probability or the density of fibers per direction for each voxel of the volume. This requires a larger number of diffusion sensitizing gradients than for the tensor model, often termed as High Angular Resolution Diffusion Imaging (HARDI).

Most models describe diffusion in each voxel separately, and for efficiency, reconstructions are usually simply carried out voxel-wise. This approach however neglects spatial coherence, i.e.\ the fact that ODFs in nearby voxels are often similar if they belong to the same fiber bundle. Incorporating spatial coherence into the reconstruction algorithm can potentially improve the results significantly. Spatial regularization was first investigated in~\cite{goh-et-al} where similarity is measured by comparing the entire ODFs in nearby voxels. Smoothing methods that take into account the underlying structure of the domain \(\Omega \times S^2\) have been suggested for example in~\cite{duits-franken}, where linear and non-linear diffusion filters are applied to the reconstructed ODF in a post-processing step, and in~\cite{becker-et-al}, where adaptive smoothing is performed on the HARDI data prior to reconstruction.

In this paper we will study the concept of \emph{fiber continuity} (FC) as presented in in~\cite{reisert-kiselev}, which uses related ideas to regularize the ODF reconstruction. Here similarity between ODFs is \emph{local} also in the orientational part and only compares voxels along fibers instead of isotropically. This way, smoothness information can extend for example from single-fiber voxels into adjacent crossings despite the sudden appearance of perpendicular structures which violate the \emph{global} similarity of the respective ODFs. The approach is based on the assumption that curvature of the fiber bundles is not too large, and that the point \((x, u) \in \X\) belongs to a fibrous structure through \(x\), directed along \(u\). So for sufficiently small step-lengths \(\tau > 0\), the ODF field \(\psi \colon \X \to \R\) assigning to each voxel \(x\) in the spatial domain \(\Omega\) the ODF \(\psi(x, \cdot)\) fulfills
\begin{equation}
  \psi(x + \tau u, u) \simeq \psi(x, u).
\end{equation}
This assumption is employed in a Tikhonov-type regularization scheme, with the anisotropic regularization term
\begin{equation}\label{eq:anisotropic-penalty}
  \int\limits_\Omega \int\limits_{S^2} \abs*{u^T \grad_x \psi(x, u)}^2 \,du \,dx.
\end{equation}

Although this regularization scheme leads to good results, its theoretical properties have not been studied yet. In particular, compactness properties, which are needed to establish convergence of discrete approximations, are not obvious since the forward operator is not smoothing in the spatial variable \(x\). The aim of this work is to analyze the convergence of a spatial regularization method similar to the one above for noisy, discrete data. This will be achieved by studying the properties of a non-standard Sobolev-type space constructed from the anisotropic derivative.

The paper is organized as follows: in \S\ref{sec:rec-scheme}, we describe the diffusion model and the regularization method, and we formally introduce the Sobolev-type space \(\Hfib\) used in this method. In \S\ref{sec:reg-with-disc}, we review some convergence results for Tikhonov regularization with approximate operators, with focus on approximation by finite dimensional operators, i.e.\ discretization. In \S\ref{sec:hfib}, the space \(\Hfib\) will be further investigated, and we show the main result of this paper, a compact embedding theorem that allows us to apply the convergence results from \S\ref{sec:reg-with-disc} to our method. Finally, \S\ref{sec:implementation} and~\ref{sec:numerics} describe some implementation details and show some numerical results, comparing the performance of the method to standard unregularized reconstructions.

\section{Reconstruction scheme}
\label{sec:rec-scheme}

Modelling of the DW signal is a difficult problem in itself since the diffusive properties of white matter can be quite complex. We will use the spherical deconvolution (SD) model that was introduced in~\cite{tournier-et-al-2004}, which is particularly attractive due to its conceptual simplicity and linearity. It has been successfully employed in both phantom and \emph{in-vivo} studies.

The model describes the data as \(S^2\)-convolution of the (fiber) ODF \(\psi \colon \X \to \R\) with a single-fiber response function \(k \colon [-1,\, 1] \to \R\) that is assumed to be identical throughout the volume:
\begin{equation}
  T \psi(x, q) := \int\limits_{S^2} k(q^T u) \, \psi(x, u) \,du, \quad q \in S^2.
\end{equation}
The response function \(k\) can either be estimated from the data in a pre-processing step or fixed a priori by modelling it e.g.\ as a Gaussian.

A problem of this model, apart from suffering from instability due to ill-posedness, is that it can lead to ODFs with negative values, which makes interpretation as a diffusion probability or fiber density impossible. Therefore, one often includes non-negativity constraints \(\psi \geq 0\). In the context of SD, non-negativity constraints have been introduced in~\cite{tournier-et-al-2007}, the resulting method being called constrained SD (CSD). Adding the fiber continuity based spatial regularization strategy introduced in~\cite{reisert-kiselev}, the method can then be written as
\begin{equation}\label{eq:optimization-problem-reisert}
  \argmin_{\psi \geq 0} \Bigl(\norm[\big]{T \psi - S^\delta}^2 + \gamma \int\limits_{\X} \abs[\big]{u^T \grad \psi(x, u)}^2 \,dx \,du \Bigr),
\end{equation}
where \(\gamma > 0\) is a regularization parameter and \(S^\delta\) is the given noisy data with
\begin{equation}
  \norm[\big]{S - S^\delta} \leq \delta
\end{equation}
for \(\delta > 0\) and exact data \(S = T \psi^\dagger\). Note that in the context of discretization, \(\delta\) quantifies both measurement errors and errors caused by interpolating the data from a finite sampling grid to all of \(\Omega \times S^2\). Our approach differs from this slightly: instead of~\eqref{eq:optimization-problem-reisert}, we solve
\begin{equation}\label{eq:optimization-problem}
  \begin{split}
    \argmin_{\psi \geq 0} \Bigl(\norm[\big]{T \psi - S^\delta}^2 + \alpha \norm[\big]{\psi}^2 - \beta \inner[\big]{\psi}{\Laplace_{S^2} \psi} \Bigr. \\
    \Bigl. \qquad + \gamma \int\limits_{\X} \abs[\big]{u^T \grad \psi(x, u)}^2 \,dx \,du \Bigr).
  \end{split}
\end{equation}
This is both due to theoretical reasons to be detailed below, and the numerical experience that the additional angular smoothness introduced by the \(\Laplace_{S^2}\)-term improves stability of the reconstruction and reduces artifacts for curved structures. The occurrence of these artifacts is not surprising since the regularization term~\eqref{eq:anisotropic-penalty} was derived for locally straight fibers. However, angular smoothness also tends to blur structures and limit the achievable angular resolution of the ODFs. Therefore, the parameter \(\beta\) has to be chosen carefully.

For a rigorous treatment of this optimization problem, we first need to define the function space in which the problem is posed. The spatial domain \(\Omega\) will be simply taken as a cube in \(\R^3\),
\begin{equation}
  \Omega = \prod_{i=1}^3 \biggl( -\frac{L_i}{2},\, \frac{L_i}{2} \biggr),
\end{equation}
with \(L \in \R_+^3\). In \(L^2(\X)\), introduce the orthonormal basis functions
\begin{equation}\label{eq:basis-definition}
  \eta_{klm} := \xi_k \otimes Y_{lm},
\end{equation}
where \(\xi_k \colon \Omega \to \C\) for \(k \in \Z^3\) is the trigonometric orthonormal basis on \(\Omega\) and \(Y_{lm} \colon S^2 \to \C\) for \(l \in \N_0\) and \(m = -l, \dots, l\) are the Spherical Harmonics on \(S^2\). Let \(\Cper(\X)\) be the space of infinitely differentiable functions on \(\R^3 \times S^2\) the spatial part of which is periodic with period \(L\). For \(\psi \in \Cper(\X)\), we denote by \(\grad \psi\) and \(\sgrad \psi\) the derivatives with respect to the spatial and orientational parts, respectively. Moreover, we extend this notation to functions \(\psi \in L^2(\X)\) using the basis \(\eta_{klm}\), i.e.
\begin{equation}
  \grad \psi = \sum_{klm} \inner{\psi}{\eta_{klm}} (\grad \xi_k) \otimes Y_{lm}
\end{equation}
and
\begin{equation}
  \sgrad \psi = \sum_{klm} \inner{\psi}{\eta_{klm}} \xi_k \otimes (\sgrad Y_{lm}),
\end{equation}
if the series converge in \({L^2(\X)}^3\). Here, we view \(\sgrad Y_{lm}\) as a map \(S^2 \to \C^3\) with \(u^T \sgrad Y_{lm}(u) = 0\) for all \(u \in S^2\). For notational simplicity, the function \(\X \to S^2\), \((x,\, u) \mapsto u\) will be simply denoted by \(u\) in the following.
\begin{definition}
  The \textbf{fiber derivative} of \(\psi \in L^2(\X)\) is defined as
  \begin{equation}
    \Dfib \psi := \sum_{klm} \inner{\psi}{\eta_{klm}} \sum_{i=1}^3 \frac{\partial \xi_k}{\partial x_i} \otimes (u_i \, Y_{lm}),
  \end{equation}
  if the series converges in \(L^2(\X)\). The \textbf{fiber space} \(\Hfib(\X)\) is defined as the space of all \(\psi \in L^2(\X)\) for which
  \begin{equation}\label{eq:def-fib-norm}
    \norm{\psi}_\fib := {\bigl( \norm{\psi}^2 + \norm{\Dfib \psi}^2
    + \norm{\sgrad \psi}^2 \bigr)}^{\frac{1}{2}}
  \end{equation}
  is finite.
\end{definition}

If \(\grad \psi\) exists, then \(\Dfib \psi = u^T \grad \psi\) is just the operator in~\eqref{eq:optimization-problem-reisert}. Note that we could have introduced scaling factors into the definition of \(\norm{\cdot}_\fib\) as in~\eqref{eq:optimization-problem}, but for the theoretical analysis it is sufficient to omit them. The convolution operator \(T\) is now viewed as a map \(T \colon \Hfib(\X) \to L^2(\X)\). The properties of this space will be studied in \S\ref{sec:hfib}.

\section{Regularization with discretization}
\label{sec:reg-with-disc}

In this section, we briefly review some results on regularization with discretization and prove a variant of a result in~\cite{lu-flemming}. Discretization of~\eqref{eq:optimization-problem} is performed as a projection method by introducing orthogonal projections \(P_h\) in \(\Hfib(\X)\) and \(Q_h\) in \(L^2(\X)\), with some discretization parameter \(h > 0\), and replacing \(T\) with
\begin{equation}
  T_h := Q_h T P_h,
\end{equation}
i.e.
\begin{equation}\label{eq:discrete-optimization-problem}
  \psireg = \argmin \bigl\{\norm{T_h \psi - S^\delta}^2 + \alpha \norm{\psi}_\fib^2 \colon \psi \in \Hfib(\X),\, \psi \geq 0 \bigr\}.
\end{equation}
There is a number of papers on regularization with discretization in the unconstrained case, we only cite~\cite{plato-vainikko}. Under a Hölder-type source condition,
\begin{equation}\label{eq:spectral-source-condition}
  \psi^\dagger \in \range\bigl( {(T^* T)}^\mu \bigr)
\end{equation}
and with a suitable parameter choice, one obtains the convergence rate
\begin{equation}\label{eq:plato-vainikko-rate}
  \norm{\psireg - \psi^\dagger} = \order\bigl(\delta^{\frac{2 \mu}{2 \mu + 1}} + \norm{T - T_h}^{2 \mu} \bigr)
\end{equation}
for \(0 < \mu \leq \frac{1}{2}\). Unfortunately, the proof relies on spectral theory and therefore does not generalize to the constrained case. Moreover, the convergence \(T_h \to T\) requires \(T\) to be compact, which is not immediately clear in our case: at least viewed as an operator from \(L^2(\X)\) to itself, the operator \(T\) is not compact due to the spatial part.

To address the first problem, spectral theory can be replaced by variational techniques in regularization theory developed in the last decade. An important part in this approach consists of replacing the spectral source condition~\eqref{eq:spectral-source-condition} by a condition in the form of a variational inequality, a so-called variational source condition. We will assume that
\begin{equation}\label{eq:vsa}
  \beta \norm[\big]{\psi - \psi^\dagger}^2 \leq \norm[\big]{\psi}^2 - \norm[\big]{\psi^\dagger}^2 + \phi\bigl( \norm{T(\psi - \psi^\dagger)} \bigr) \quad \text{for all \(\psi \in C\)},
\end{equation}
some \(\beta > 0\), and a strictly increasing, concave function \(\phi \colon \R_+ \to \R_+\) with \(\phi(0) = 0\). Here, \(C\) is the convex constraint set.

It has been shown in~\cite{hofmann-yamamoto} that the spectral Hölder condition~\eqref{eq:spectral-source-condition} implies the variational source condition~\eqref{eq:vsa} with \(\phi(t) = t^{\frac{2\mu}{2\mu+1}}\) for any set \(C\), i.e.\ even if \(C\) is the whole space. For closed, convex sets \(C\) and \(\phi(t) \sim \sqrt{t}\),~\eqref{eq:vsa} is equivalent (c.f.~\cite{flemming-hofmann}) to the projected source condition
\begin{equation}
  \psi^\dagger \in \range(P_C T^*)
\end{equation}
that is well-known (c.f.~\cite{engl-et-al}) for this type of problem.

In~\cite{lu-flemming}, convergence was shown for non-linear Tikhonov regularization with approximate operators employing a variational smoothness assumption. The following result is a small modification --- with almost identical proof --- of~\cite[Theorem~3.1]{lu-flemming}, specializing it to the case of linear operators but extending it to unbounded constraint sets \(C\), for which the assumption \(\sup_{\psi \in C} \norm{T(\psi) - T_h(\psi)} \to 0\) in~\cite{lu-flemming} is typically not satisfied.
\begin{theorem}\label{thm:convergence-lu-flemming}
  Let \(X\), \(Y\) be Hilbert spaces, \(C \subset X\) closed and convex, \(T,\, T_h \colon X \to Y\) linear operators with \(\norm{T - T_h} \leq \eta_h\), and assume that the variational smoothness assumption~\eqref{eq:vsa} hold true. Given \(S^\delta \in Y\) with \(\norm{S^\delta - T \psi^\dagger} \leq \delta\), let \(\psireg \in C\) be the (unique) minimizer of
  \begin{equation}
    C \ni \psi \mapsto \norm{T_h \psi - S^\delta}^2 + \alpha \norm{\psi}^2.
  \end{equation}
  If \(\alpha > \eta_h^2\), then
  \begin{equation}
    \beta \norm[\big]{\psireg - \psi^\dagger}^2 \leq \frac{6 \eta_h^2 \norm{\psi^\dagger}^2 + 4 \delta^2}{\alpha - \eta_h^2} + {(-\phi)}^*\biggl( -\frac{1}{4(\alpha - \eta_h^2)} \biggr),
  \end{equation}
  where \({(-\phi)}^*(s) := \sup_{t \in \R_+} (s t + \phi(t))\) denotes the Fenchel conjugate of \(-\phi\).
\end{theorem}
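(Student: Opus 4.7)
The proof is a modification of the argument in \cite[Theorem~3.1]{lu-flemming}, adapted to linear forward operators and (possibly unbounded) closed convex constraint sets. The strategy is to combine the minimizing property of $\psireg$ with the variational source condition~\eqref{eq:vsa}, converting the nonlinear $\phi$-term into a linear one via Fenchel--Young so that the Fenchel conjugate $(-\phi)^*$ appears in the final bound.

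My plan is the following. First, testing the discretized Tikhonov functional at the admissible $\psi^\dagger \in C$ yields
\[
  \norm{T_h \psireg - S^\delta}^2 + \alpha \norm{\psireg}^2 \le \norm{T_h \psi^\dagger - S^\delta}^2 + \alpha \norm{\psi^\dagger}^2,
\]
and together with $\norm{T_h \psi^\dagger - S^\delta} \le \eta_h \norm{\psi^\dagger} + \delta$ this bounds both $\norm{\psireg}^2 - \norm{\psi^\dagger}^2$ and $\norm{T_h \psireg - S^\delta}^2$ in terms of $\eta_h^2 \norm{\psi^\dagger}^2 + \delta^2$. Second, applying \eqref{eq:vsa} at $\psi = \psireg$ and then the Fenchel--Young inequality $\phi(t) \le (-\phi)^*(-\lambda) + \lambda t$ gives, for arbitrary $\lambda > 0$,
\[
  \beta \norm{\psireg - \psi^\dagger}^2 \le \norm{\psireg}^2 - \norm{\psi^\dagger}^2 + (-\phi)^*(-\lambda) + \lambda \norm{T(\psireg - \psi^\dagger)}.
\]
Third, $\norm{T(\psireg - \psi^\dagger)} \le \norm{T_h \psireg - S^\delta} + \eta_h \norm{\psireg} + \delta$ reduces the unknown $\norm{T(\psireg-\psi^\dagger)}$ to quantities already controlled by the minimizing inequality.

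The final step is a careful Young's-inequality bookkeeping: the term $\lambda \norm{T_h \psireg - S^\delta}$ is absorbed against the contribution $\norm{T_h \psireg - S^\delta}^2/\alpha$ arising from the minimizing inequality, contributing only $\order(\alpha \lambda^2)$; the term $\lambda \eta_h \norm{\psireg}$ is absorbed against the extra $\alpha \norm{\psireg}^2$ on the left, which is exactly what replaces the denominator $\alpha$ by $\alpha - \eta_h^2$. The choice $\lambda = \tfrac{1}{4(\alpha - \eta_h^2)}$ then balances these absorptions and reproduces the constants $6$ and $4$ in the numerators.

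I expect the main obstacle to be the bookkeeping rather than any conceptual difficulty: the various Young's applications must be tuned so that the $\norm{\psireg}^2$ and $\norm{T_h \psireg - S^\delta}^2$ cross terms really cancel, and so that the final bound takes the precise claimed form. Relative to \cite{lu-flemming}, the new structural point is that the uniform bound $\sup_{\psi \in C} \norm{(T - T_h)\psi} \to 0$ is unavailable on an unbounded $C$, so the estimate $\norm{(T - T_h)\psireg} \le \eta_h \norm{\psireg}$ must be combined with the a priori control on $\norm{\psireg}$ coming from the minimizing inequality itself, and it is this coupling that forces the denominator $\alpha - \eta_h^2$ and the hypothesis $\alpha > \eta_h^2$.
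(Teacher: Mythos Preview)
Your overall strategy---minimizing property, variational source condition, triangle inequality on $\norm{T(\psireg-\psi^\dagger)}$, and a Fenchel-type step---is indeed the same as the paper's. The difference is in how the bookkeeping is organized, and here your plan has a gap.

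You propose to apply Fenchel--Young first, obtaining the linear term $\lambda\,\norm{T(\psireg-\psi^\dagger)}$, then bound this by $\lambda(\norm{T_h\psireg-S^\delta}+\eta_h\norm{\psireg}+\delta)$ and absorb each piece via Young's inequality. The absorption of $\lambda\,\norm{T_h\psireg-S^\delta}$ against $-\tfrac{1}{\alpha}\norm{T_h\psireg-S^\delta}^2$ from the minimizing inequality is fine. But the claim that $\lambda\eta_h\norm{\psireg}$ is absorbed against ``the extra $\alpha\norm{\psireg}^2$ on the left'' does not hold: once you have used the minimizing inequality to replace $\norm{\psireg}^2-\norm{\psi^\dagger}^2$, there is no negative $\norm{\psireg}^2$ term left anywhere. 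Falling back on the a~priori bound $\norm{\psireg}^2\le \norm{\psi^\dagger}^2+\alpha^{-1}(\eta_h\norm{\psi^\dagger}+\delta)^2$ gives a messier estimate that does not produce the clean denominator $\alpha-\eta_h^2$ or the stated constants.

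The paper avoids this by working with the \emph{squared} quantity from the start: it first shows $\tfrac14\norm{T(\psireg-\psi^\dagger)}^2 \le \eta_h^2(\norm{\psireg}^2+\norm{\psi^\dagger}^2)+\norm{T_h\psireg-S^\delta}^2+\norm{T_h\psi^\dagger-S^\delta}^2$ and combines this with the minimizing inequality to obtain a bound on $\alpha(\norm{\psireg}^2-\norm{\psi^\dagger}^2)$ that contains both $+\eta_h^2\norm{\psireg}^2$ and $-\tfrac14\norm{T(\psireg-\psi^\dagger)}^2$. The key device is then the algebraic splitting
\[
\norm{\psireg}^2-\norm{\psi^\dagger}^2=\gamma\bigl(\norm{\psi^\dagger}^2-\norm{\psireg}^2\bigr)+(1+\gamma)\bigl(\norm{\psireg}^2-\norm{\psi^\dagger}^2\bigr),
\]
which manufactures a $-\gamma\norm{\psireg}^2$ term. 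Substituting the previous bound into the $(1+\gamma)$-part yields $\bigl(\tfrac{(1+\gamma)\eta_h^2}{\alpha}-\gamma\bigr)\norm{\psireg}^2$, and the choice $\gamma=\eta_h^2/(\alpha-\eta_h^2)$ makes this vanish; this is precisely where the hypothesis $\alpha>\eta_h^2$ and the denominator $\alpha-\eta_h^2$ enter. The Fenchel step is then applied to $-\tfrac{1+\gamma}{4\alpha}\norm{T(\psireg-\psi^\dagger)}^2+\phi(\cdot)$, giving $(-\phi)^*\bigl(-\tfrac{1}{4(\alpha-\eta_h^2)}\bigr)$ directly. So the missing ingredient in your outline is this $\gamma$-splitting (or an equivalent device) to create the negative $\norm{\psireg}^2$ needed for the absorption.
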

\begin{proof}
  Using the triangle inequality, we have
  \begin{equation}
    \begin{split}
      {}& \frac{1}{4} \norm[\big]{T (\psireg - \psi^\dagger)}^2 \\
      \leq {}& \norm[\big]{(T - T_h) \psireg}^2 + \norm[\big]{T_h \psireg - S^\delta}^2 + \norm[\big]{T_h \psi^\dagger - S^\delta}^2 + \norm[\big]{(T - T_h) \psi^\dagger}^2 \\
      \leq {}& \eta_h^2 \bigl( \norm{\psireg}^2 + \norm{\psi^\dagger}^2 \bigr) +\norm[\big]{T_h \psireg - S^\delta}^2 + \norm[\big]{T_h \psi^\dagger - S^\delta}^2.
    \end{split}
  \end{equation}
  Due to this and the minimization property of \(\psireg\),
  \begin{equation}
    \begin{split}
      {}& \alpha \norm[\big]{\psi}^2 - \alpha \norm[\big]{\psi^\dagger}^2 \leq \norm[\big]{T_h \psi^\dagger - S^\delta}^2 - \norm[\big]{T_h \psireg - S^\delta}^2 \\
      \leq {}& 2 \norm[\big]{T_h \psi^\dagger - S^\delta}^2 + \eta_h (\norm{\psi^\dagger}^2 + \norm{\psireg}^2) - \frac{1}{4} \norm[\big]{T (\psireg - \psi^\dagger)}^2 \\
      \leq {}& 2 {(\eta_h \norm{\psi^\dagger} + \delta)}^2 + \eta_h^2 (\norm{\psi^\dagger}^2 + \norm{\psireg}^2) - \frac{1}{4} \norm[\big]{T (\psireg - \psi^\dagger)}^2 \\
      \leq {}& 5 \eta_h^2 \norm[\big]{\psi^\dagger}^2 + 4 \delta^2 + \eta_h^2 \norm[\big]{\psireg}^2 - \frac{1}{4} \norm[\big]{T (\psireg - \psi^\dagger)}^2.
    \end{split}
  \end{equation}
  Let \(\gamma > 0\) be arbitrary. Inserting the above inequality into assumption~\eqref{eq:vsa} yields
  \begin{equation}
    \begin{split}
      {}& \beta \norm[\big]{\psireg - \psi^\dagger} \\
      \leq {}& \gamma (\norm{\psi^\dagger}^2 - \norm{\psireg}^2) + (1+\gamma) (\norm{\psireg}^2 - \norm{\psi^\dagger}^2) + \phi\bigl( \norm{T (\psireg - \psi^\dagger)}^2 \bigr) \\
      \leq {}& \gamma \norm[\big]{\psi^\dagger}^2 + \frac{1+\gamma}{\alpha} (5 \eta_h^2 \norm{\psi^\dagger}^2 + 4 \delta^2) + \biggl( \frac{(1+\gamma) \eta_h^2}{\alpha} - \gamma \biggr) \norm[\big]{\psireg}^2 \\
      {}& \qquad -\frac{1+\gamma}{4 \alpha} \norm[\big]{T (\psireg - \psi^\dagger)}^2 + \phi\bigl( \norm{T (\psireg - \psi^\dagger)}^2 \bigr) \\
      \leq {}& \gamma \norm[\big]{\psi^\dagger}^2 + \frac{(1+\gamma)}{\alpha} (5 \eta_h^2 \norm{\psi^\dagger}^2 + 4 \delta^2) + \biggl( \frac{(1+\gamma) \eta_h^2}{\alpha} - \gamma \biggr) \norm[\big]{\psireg}^2 \\
      {}& \qquad + {(-\phi)}^*\Bigl( -\frac{1+\gamma}{4 \alpha} \Bigr) \\
    \end{split}
  \end{equation}
  If we choose \(\gamma\) such that
  \begin{equation}
    \frac{(1+\gamma) \eta_h^2}{\alpha} - \gamma = 0,
  \end{equation}
  which requires \(\alpha > \eta_h^2\) in order for \(\gamma\) to be positive, we finally obtain the desired estimate.
\end{proof}
Using an a-priori parameter choice similar to~\cite{flemming}, which for differentiable \(\phi\) amounts to
\begin{equation}
  \alpha(\delta, h) \sim \eta_h^2 + \frac{1}{\phi'(\eta_h^2 \norm{\psi^\dagger}^2 + \delta^2)} > \eta_h^2,
\end{equation}
leads to a convergence rate
\begin{equation}
  \norm[\big]{\psireg - \psi^\dagger}^2 = \order\bigl( \phi(\eta_h^2 \norm{\psi^\dagger}^2 + \delta^2) \bigr).
\end{equation}
For \(C = X\) and \(\phi(t) \sim \sqrt{t}\), which corresponds to \(\mu = \frac{1}{2}\) in~\eqref{eq:spectral-source-condition}, this rate is identical to~\eqref{eq:plato-vainikko-rate} with respect to the data error \(\delta\), but somewhat worse with respect to the operator error \(\eta_h\).

\section{Properties of the space \(\Hfib(\X)\)}
\label{sec:hfib}

What remains to analyze is the convergence \(T_h \to T\). As mentioned before, this would not be possible if \(T\) was defined on \(L^2(\X)\). In the following, we will show that \(\Hfib(\X)\) is compactly embedded in \(L^2(\X)\), thus enabling convergence if the projections converge pointwise.

We introduce the operator \(D_0 \colon \Hfib(\X) \subset L^2(\X) \to {L^2(\X)}^3 \times {L^2(\X)}^3\) by
\begin{equation}
  D_0 := \begin{pmatrix} \Dfib \\ \sgrad \end{pmatrix}
\end{equation}
in order to write
\begin{equation}
  \norm[\big]{\psi}_\fib^2 = \norm[\big]{\psi}^2 + \norm[\big]{D_0 \psi}^2.
\end{equation}

Moreover, we will need Sobolev spaces on \(\X\). For this purpose, define the operator \(\Lambda^s\) for \(s \in \R\) by
\begin{equation}
  \Lambda^s \psi := \sum_{klm} \lambda_{kl}^s \inner{\psi}{\eta_{klm}} \eta_{klm}, \qquad \lambda_{kl} = \sqrt{1 + \norm{k}^2 + l^2},
\end{equation}
whenever the series converges. Then the Sobolev space \(H^s(\X)\) of order \(s \geq 0\) can be written as the set of all \(\psi \in L^2(\X)\) for which \(\Lambda^s \psi \in L^2(\X)\), equipped with the inner product
\begin{equation}
  \inner{\psi}{\phi}_{H^s(\X)} = \inner{\Lambda^s \psi}{\Lambda^s \phi}.
\end{equation}
It is easy to see that a function \(\psi \in L^2(\X)\) belongs to \(H^1(\X)\) if and only if both \(\grad \psi\) and \(\sgrad \psi\) exist in \(L^2(\X)\), and that there is an equivalence of norms
\begin{equation}
  \norm{\cdot}^2_{H^1(\X)} \simeq \norm{\cdot}^2 + \norm{\grad \cdot}^2 + \norm{\sgrad \cdot}^2.
\end{equation}

We first need some auxiliary results.

\begin{lemma}
  \(\Hfib(\X)\) equipped with the norm \(\norm{\cdot}_\fib\) is a Hilbert space. It coincides with the domain of definition of the self-adjoint operator \(\abs{D_0} := {(D_0^* D_0)}^{1/2}\).
\end{lemma}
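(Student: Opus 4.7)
The plan is to show $\Hfib$ arises as the graph-norm domain of a closed, densely defined operator $D_0$, from which both claims follow by standard Hilbert space theory and polar decomposition. Since $\norm{\psi}_\fib^2 = \norm{\psi}^2 + \norm{D_0 \psi}^2$ manifestly comes from the inner product $\inner{\psi}{\phi} + \inner{D_0 \psi}{D_0 \phi}$, the essential task reduces to proving that $D_0 \colon L^2(\X) \supset \Hfib \to L^2(\X) \times L^2(\X)^3$ is closed.

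For closedness I would exploit the spatial translation invariance of $D_0$. The partial derivatives act diagonally on the trigonometric basis via $\partial_{x_i} \xi_k = \omega_{k,i} \xi_k$ with $\omega_{k,i} \in i\R$, and $\sgrad$ acts only in the $u$ variable, so both $\Dfib$ and $\sgrad$ preserve the decomposition $L^2(\X) \cong \bigoplus_{k \in \Z^3} L^2(S^2)$ through spatial Fourier coefficients $\psi^{(k)}$. A short computation gives $D_0 = \bigoplus_k D_0^{(k)}$ with $D_0^{(k)} f := (M_k f,\, \sgrad f)$, where $M_k$ is multiplication by the bounded function $u \mapsto \sum_i \omega_{k,i} u_i$ on $S^2$. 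Since each $M_k$ is bounded on $L^2(S^2)$ and $\sgrad$ is closed on $L^2(S^2)$ with domain $H^1(S^2)$, every $D_0^{(k)}$ is closed, and an orthogonal direct sum of closed operators is closed. The natural domain of this direct sum coincides with the formal-series definition of $\Hfib$, so $(\Hfib, \norm{\cdot}_\fib)$ is complete.

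For the second claim, I note that $D_0$ is densely defined: $\vspan\{\eta_{klm}\}$ lies in $\Hfib$ (the defining series for $D_0 \eta_{klm}$ collapses to a finite sum) and is $L^2$-dense. Von Neumann's theorem then provides that $D_0^* D_0$ is non-negative self-adjoint, and the functional calculus supplies a self-adjoint square root $\abs{D_0}$ satisfying $\domain(\abs{D_0}) = \domain(D_0) = \Hfib$ and $\norm{\abs{D_0}\psi} = \norm{D_0 \psi}$; this last identity shows that $\norm{\cdot}_\fib$ coincides with the graph norm of $\abs{D_0}$. The main obstacle will be making the block decomposition fully rigorous — in particular, checking that the domain of $\bigoplus_k D_0^{(k)}$ really is the formal-series $\Hfib$ — which boils down to an interchange-of-summation argument justified by the orthogonality of distinct $\xi_k$-blocks in $L^2(\X)$.
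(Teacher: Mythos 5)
Your proposal is correct, and it reaches both conclusions by a route that differs from the paper's in its two key steps. For closedness of \(D_0\), the paper argues abstractly: it asserts a weak (adjoint-type) characterization \(\Dfib \psi = \phi \iff \inner{\phi}{g} = -\inner{\psi}{\Dfib g}\) for all \(g \in \Cper(\X)\), which exhibits \(\Dfib\) as (the restriction of) an adjoint and hence closed, and combines this with closedness of \(\sgrad\). You instead diagonalize in the spatial Fourier index and write \(D_0\) as an orthogonal direct sum of operators \(f \mapsto (M_k f, \sgrad f)\) on \(L^2(S^2)\), each a bounded perturbation of the closed operator \(\sgrad\); this is more concrete, makes the domain identification with the formal-series definition of \(\Hfib(\X)\) explicit (the interchange-of-summation point you flag is exactly the part the paper's one-line assertion also quietly relies on, and it goes through because \(M_k\) is bounded on each block and distinct \(\xi_k\)-blocks are orthogonal), and as a bonus yields an explicit block structure for \(D_0^* D_0\) that could be reused elsewhere. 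For the second claim, the paper obtains self-adjointness of \(D_0^* D_0\) via a Friedrichs extension of its restriction to smooth functions and then cites the polar decomposition, whereas you invoke von Neumann's theorem (\(T^*T\) is self-adjoint for any closed densely defined \(T\)) directly; your version is arguably cleaner since it avoids the extra step of identifying the Friedrichs extension with \(D_0^* D_0\), and the identity \(\norm{\abs{D_0}\psi} = \norm{D_0 \psi}\) immediately gives the coincidence of \(\norm{\cdot}_\fib\) with the graph norm of \(\abs{D_0}\). Both arguments are standard and correct; yours trades a little more computation for fewer unproved assertions.
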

\begin{proof}
  \(\Dfib\) is closed, as follows from the weak characterization
  \begin{equation}
    \Dfib \psi = \phi \iff \inner{\phi}{g} = -\inner{\psi}{\Dfib g} \quad \text{for all \(g \in \Cper(\X)\)}.
  \end{equation}
  This and the closedness of \(\sgrad\) imply that \(D_0\) is closed as well. Therefore, its domain \(\domain(D_0) = \Hfib(\X)\) is a Hilbert space with the graph norm \(\norm{\cdot}_{\fib} = {(\norm{\cdot}^2 + \norm{D_0 \cdot}^2)}^{\frac{1}{2}}\).

  Self-adjointness of \(D_0^* D_0\) (initially defined for example on \(C^\infty(\X)\)) can be established by a Friedrich's extension (see e.g.~\cite[Appendix A.8]{taylor}). Hence \(\abs{D_0}\) is well defined by the functional calculus. By the polar decomposition of closed operators in Hilbert spaces we have \(\domain(\abs{D_0}) = \domain(D_0)\).
\end{proof}

\begin{lemma}\label{lemma:c-inf-dense-in-hfib}
  \(\Cper(\X)\) and \(\vspan\{\eta_{klm}\}\) are dense in \(\Hfib(\X)\).
\end{lemma}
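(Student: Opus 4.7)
The plan is to establish density of $\vspan\{\eta_{klm}\}$ in $\Hfib(\X)$; density of $\Cper(\X)$ then follows immediately since each $\eta_{klm}$ lies in $\Cper(\X)$. The approximation is carried out in two stages: first, a spatial mollification reduces the problem to $H^1(\X)$, and then a standard Parseval truncation against the orthonormal basis $\{\eta_{klm}\}$ concludes.

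For the first stage, extend $\psi \in \Hfib(\X)$ periodically in $x$, take a symmetric Friedrichs mollifier $\rho_\epsilon \in C_c^\infty(\R^3)$, and set $\psi_\epsilon(x, u) := \int \rho_\epsilon(x - y)\, \psi(y, u)\, dy$. The key claim is that $x$-convolution commutes with both $\Dfib$ and $\sgrad$: for $\sgrad$ this is immediate since differentiation and convolution act on disjoint variables, while for $\Dfib$ one invokes the weak characterization $\inner{\Dfib \psi}{g} = -\inner{\psi}{\Dfib g}$ from the previous lemma and exploits the fact that the factor $u$ in $u^T \grad_x$ is passive under the $x$-integration, so it can be pulled through the convolution. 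This commutation is the main technical point; granting it, standard $L^2$-mollification theory yields $\psi_\epsilon \to \psi$ in $\Hfib(\X)$, while Young's inequality additionally gives $\grad \psi_\epsilon = (\grad \rho_\epsilon) *_x \psi \in L^2(\X)^3$, so $\psi_\epsilon \in H^1(\X)$.

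For the second stage, let $P_N$ denote the orthogonal $L^2$-projection onto $\vspan\{\eta_{klm} : \abs{k} \leq N,\, l \leq N\}$. Since $\{\eta_{klm}\}$ simultaneously diagonalizes $-\Laplace_x$ (eigenvalues depending only on $k$) and $-\Laplace_{S^2}$ (eigenvalues $l(l+1)$), Parseval's identity yields for any $\phi \in H^1(\X)$ the expansions
\begin{equation*}
  \norm{\grad \phi}^2 = \sum_{klm} \norm{\grad \xi_k}^2\, \abs{\inner{\phi}{\eta_{klm}}}^2,\quad \norm{\sgrad \phi}^2 = \sum_{klm} l(l+1)\, \abs{\inner{\phi}{\eta_{klm}}}^2,
\end{equation*}
whose tails over $\abs{k} > N$ or $l > N$ vanish as $N \to \infty$; hence $P_N \phi \to \phi$ in $H^1(\X)$. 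The pointwise bound $\abs{u^T \grad \phi} \leq \abs{\grad \phi}$ gives $\norm{\cdot}_\fib \leq \norm{\cdot}_{H^1(\X)}$, so the convergence persists in the $\Hfib$-norm. A diagonal argument combining the two stages produces approximants $P_N \psi_{\epsilon_N} \in \vspan\{\eta_{klm}\}$ converging to $\psi$ in $\Hfib(\X)$, which is the claim.
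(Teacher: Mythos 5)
Your argument is correct, but it takes a genuinely different route from the paper. The paper stays entirely inside the operator-theoretic framework of the preceding lemma: it first proves that the basis truncations $\tau_n\psi$ converge in $\norm{\cdot}_\fib$ for $\psi$ in the smaller domain $\domain(\abs{D_0}^2)$ (by combining weak convergence $\abs{D_0}\tau_n\psi \wto \abs{D_0}\psi$ with convergence of the norms, the latter extracted from $\tau_n\abs{D_0}^2\tau_n\psi \wto \abs{D_0}^2\psi$ and the identity $\abs{D_0}^2 = -\Laplace_{S^2} - \Dfib^2$), and then upgrades to all of $\Hfib(\X) = \domain(\abs{D_0})$ via the spectral projections $E_n - E_{-n}$ of $\abs{D_0}$. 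You instead gain regularity concretely: a periodic spatial mollification commutes with both $\Dfib$ and $\sgrad$ (the $\Dfib$ case correctly handled through the weak characterization, since the multiplier $u$ does not interact with the $x$-convolution) and, crucially, promotes $\psi$ to $H^1(\X)$ because the derivative can be thrown onto the mollifier via Young's inequality; basis truncation in $H^1(\X)$ is then standard, and $\norm{\cdot}_\fib \lesssim \norm{\cdot}_{H^1}$ transfers the convergence. Your intermediate dense class is thus $H^1(\X)$ rather than $\domain(\abs{D_0}^2)$, and your proof avoids the functional calculus and self-adjointness machinery altogether, at the price of relying on the torus structure of $\Omega$ (so that convolution is available and commutes with the periodic basis); the paper's argument is more abstract but would survive in settings where no such translation structure exists. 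Both proofs are complete as sketched; in yours the one point deserving a full write-up is the weak commutation identity $\Dfib(\rho_\epsilon *_x \psi) = \rho_\epsilon *_x (\Dfib\psi)$, which you have correctly identified as the technical crux.
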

\begin{proof}
  Since \(\vspan\{\eta_{klm}\} \subset \Cper(\X)\), it suffices to consider \(\vspan\{\eta_{klm}\}\).

  In a first step, we show that \(\vspan\{\eta_{klm}\}\) is dense in \(\domain(\abs{D_0}^2) \subset \Hfib(\X)\). For \(n \in \N\), let the orthogonal projections \(\tau_n \colon L^2(\X) \to L^2(\X)\) be given by
  \begin{equation}
    \tau_n \psi := \sum_{klm}^n \inner{\psi}{\eta_{klm}}_{L^2} \eta_{klm},
  \end{equation}
  where the sum runs over all \((k,l,m)\) with \(\abs{k} \leq n\), \(\abs{l} \leq n\) and \(\abs{m} \leq l\). Then \(\tau_n \psi \to \psi\) in \(L^2(\X)\) as \(n \to \infty\) since the orthonormal system \(\{\eta_{klm}\}\) is complete. It suffices to show that
  \begin{equation}\label{eq:density}
    \lim_{n \to \infty}\, \norm{\tau_n \psi - \psi}_\fib = 0 \quad \text{for all \(\psi \in \domain(\abs{D_0}^2) \subset \Hfib(\X)\)}.
  \end{equation}
  For all \(\phi \in \Cper(\X)\),
  \begin{equation}
    \inner[\big]{\abs{D_0} \tau_n \psi}{\phi} \to \inner[\big]{\abs{D_0} \psi}{\phi},
  \end{equation}
  and since \(\Cper(\X)\) is dense in \(L^2(\X)\), it follows that \(\abs{D_0} \tau_n \psi \wto \abs{D_0} \psi\). Moreover, \(\abs{D_0}^2 \tau_n \phi \to \abs{D_0}^2 \phi\). This can be seen using \(\abs{D_0}^2 = -\Laplace_{S^2} - \Dfib^2\),
  \begin{equation}
    \Laplace_{S^2} \tau_n \phi = \tau_n \Laplace_{S^2} \phi \to \Laplace_{S^2} \phi
  \end{equation}
  and
  \begin{equation}
    \Dfib^2 \tau_n \phi = \sum_{i,j=1}^3 u_i u_j (\tau_n \partial_{x_i} \partial_{x_j} \phi) \to \sum_{i=1}^3 u_i u_j \partial_{x_i} \partial_{x_j} \phi = \Dfib^2 \phi,
  \end{equation}
  where we have used that both spatial and angular derivatives commute with
  \(\tau_n\). Therefore,
  \begin{equation}
    \inner[\big]{\tau_n \abs{D_0}^2 \tau_n \psi}{\phi} = \inner[\big]{\tau_n \psi}{\abs{D_0}^2 \tau_n \phi} \to \inner[\big]{\psi}{\abs{D_0}^2 \phi} = \inner[\big]{\abs{D_0}^2 \psi}{\phi},
  \end{equation}
  i.e.\ \(\tau_n \abs{D_0}^2 \tau_n \psi \wto \abs{D_0}^2 \psi\), and hence \(\norm{\abs{D_0} \tau_n \psi} \to \norm{\abs{D_0} \psi}\). Since weak
  convergence and convergence of the norms implies strong convergence, we obtain~\eqref{eq:density}.

  The general case can be reduced to the special case above by density of \(\domain(\abs{D_0}^2)\) in \(\Hfib(\X) = \domain(\abs{D_0})\) shown below: For \(\psi \in \domain(\abs{D_0})\) and \(\epsilon > 0\) there exists \(\psi_\epsilon \in \domain(\abs{D_0}^2)\) such that \(\norm{\psi - \psi_\epsilon}_\fib \leq \epsilon/2\) and \(n \in \N\) such that \(\norm{\psi_\epsilon - \tau_n \psi_\epsilon} \leq \epsilon/2\). Hence, by the triangle inequality \(\norm{\psi-\tau_n\psi_\epsilon} \leq \epsilon\).

  The density of \(\domain(\abs{D_0}^2)\) in \(\Hfib(\X) = \domain(\abs{D_0})\) can be shown by considering the spectral decomposition \({(E_\lambda)}_{\lambda \in \R}\) of \(\abs{D_0}\). For \(\psi \in \domain(\abs{D_0})\), let
  \begin{equation}
    \psi_n = E_n \psi - E_{-n} \psi, \qquad n \in \N.
  \end{equation}
  Then
  \begin{equation}
    \norm[\big]{\abs{D_0}^2 \psi_n} = \int\limits_{-n-}^{n+} \lambda^4 \,d \norm[\big]{E_\lambda \psi}^2 \leq n^4 \norm[\big]{\psi}^2,
  \end{equation}
  i.e.\ \(\psi_n \in \domain(\abs{D_0}^2)\). Moreover, \(\psi_n \to \psi\) and \(\abs{D_0} \psi_n = E_n \abs{D_0} \psi - E_{-n} \abs{D_0} \psi \to \abs{D_0} \psi\).
\end{proof}

Additionally, some regularity properties of the operators introduced above and their commutators are needed. These could be obtained easily using the formalism of pseudo-differential operators. However, for the special cases considered here, they can also be proved directly.
\begin{lemma}\label{lemma:regularity-of-commutators}
  For each \(s \in \R\), the operators
  \begin{itemize}
  \item \(\grad,\, \sgrad,\, \Dfib \colon H^s(\X) \to H^{s-1}(\X)\) and their adjoints,
  \item \(\comm{\Lambda^{-1}}{u_i} \colon H^s(\X) \to H^{s+2}(\X)\),
  \item and \(\comm{\Lambda^{-1}}{\grad},\, \comm{\Lambda^{-1}}{\sgrad_i} \colon H^s(\X) \to H^{s+1}(\X)\)
  \end{itemize}
  are bounded. Here, \(\comm{A}{B} := AB - BA\) denotes the commutator.
\end{lemma}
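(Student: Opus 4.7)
The plan is to work directly in the orthonormal basis $\eta_{klm} = \xi_k \otimes Y_{lm}$, in which $\Lambda^s$ is diagonal with eigenvalue $\lambda_{kl}^s$, and to estimate each operator through its matrix coefficients between basis vectors, following the direct route mentioned in the remark preceding the lemma. The only inputs from special-function theory I use are: (i) $\partial_{x_i} \xi_k$ is a scalar multiple of $\xi_k$ with factor of modulus $\leq C\abs{k} \leq C\lambda_{kl}$; (ii) multiplication by $u_i$ sends $Y_{lm}$ to a linear combination of $Y_{l',m'}$ with $l' \in \{l-1,\, l+1\}$ and \emph{uniformly bounded} coefficients, which follows from the Clebsch--Gordan selection rule together with the fact that each $u_i$ is itself a linear combination of the degree-one spherical harmonics; (iii) the components $(\sgrad Y_{lm})_i$ likewise expand in $Y_{l',m'}$ with $l' \in \{l-1,\, l+1\}$ and coefficients of size $\leq Cl \leq C\lambda_{kl}$, which one reads off from the identity $\partial_{x_i}(r^l Y_{lm})\big|_{r=1} = l\, u_i Y_{lm} + (\sgrad Y_{lm})_i$ applied to the harmonic polynomial extension.

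For the first bullet, I expand $\psi = \sum c_{klm} \eta_{klm}$ and use (i)--(iii) to bound each matrix coefficient of $\grad$, $\sgrad$, and $\Dfib$ between $\eta_{klm}$ and $\eta_{kl'm'}$ by $C \lambda_{kl}$, with only $l' \in \{l-1, l, l+1\}$ contributing. Since this range of $l'$ ensures $\lambda_{kl'} \sim \lambda_{kl}$, the estimate $\lambda_{kl'}^{2(s-1)} \cdot \lambda_{kl}^2 \leq C \lambda_{kl}^{2s}$ together with a finite-cardinality sum over the allowed $(l', m')$ gives $\norm{A \psi}_{H^{s-1}}^2 \leq C \norm{\psi}_{H^s}^2$ for $A \in \{\grad,\, \sgrad,\, \Dfib\}$. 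The adjoints are either formally negatives of the same operator (for $\Dfib$, since $u$ is independent of $x$ and the spatial part is periodic, so no boundary terms arise) or divergence-type operators with matrix coefficients of the same structure, hence the same bound.

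For the commutators the decisive simplification is that both $\Lambda^{-1}$ and each $\partial_{x_i}$ are diagonal in $\eta_{klm}$, so $\comm{\Lambda^{-1}}{\grad_i} = 0$ and only $\comm{\Lambda^{-1}}{u_i}$ and $\comm{\Lambda^{-1}}{\sgrad_i}$ need to be analyzed. For these the matrix coefficient between $\eta_{klm}$ and $\eta_{kl'm'}$ acquires the factor $\lambda_{kl'}^{-1} - \lambda_{kl}^{-1} = (\lambda_{kl}^2 - \lambda_{kl'}^2) / \bigl(\lambda_{kl} \lambda_{kl'} (\lambda_{kl} + \lambda_{kl'})\bigr)$. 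Since $\lambda_{kl}^2 - \lambda_{kl'}^2 = l^2 - l'^2$ and $\abs{l - l'} \leq 1$, this difference is bounded by $C / \lambda_{kl}^2$, yielding a gain of two orders beyond the naive estimate. Combined with the bound on the original matrix element ($C$ for $u_i$, $C\lambda_{kl}$ for $\sgrad_i$), one obtains effective gains of two and one orders respectively, which translate directly to the claimed mapping properties $H^s \to H^{s+2}$ and $H^s \to H^{s+1}$.

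The main obstacle is really only step (iii) above, namely the derivation of the selection rule and the size bound for the components of $\sgrad Y_{lm}$; (i) is elementary and (ii) is a classical consequence of $u_i$ being a spherical harmonic of degree one. Once (i)--(iii) are in place, all six mapping properties reduce to a uniform bound on matrix coefficients combined with the observation that neighboring eigenvalues $\lambda_{kl'}$ with $l' \in \{l-1, l, l+1\}$ are comparable.
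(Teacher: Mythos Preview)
Your proposal is correct and follows essentially the same route as the paper: expand in the basis \(\eta_{klm}\), use the selection rule \(\abs{l-l'}\leq 1\) for multiplication by \(u_i\) and for the components of \(\sgrad Y_{lm}\), and estimate \(\lambda_{kl}^{-1}-\lambda_{kl'}^{-1}\) via the factorization you wrote down, which is exactly the paper's \(\order(l\,\lambda_{kl}^{-3})\) bound. The only cosmetic differences are that the paper declares the first bullet ``obvious from the definitions'' without spelling out the matrix-coefficient argument, and bounds the size of \(\inner{\sgrad_i Y_{l'm'}}{Y_{lm}}\) directly by \(\norm{\sgrad Y_{l'm'}}^2 = l'(l'+1)\) rather than via the harmonic-polynomial identity you use in (iii); both derivations are valid and lead to the same \(\order(l)\) estimate.
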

\begin{proof}
  The first statement is obvious from the definitions. For the second one,
  \begin{equation}
    \comm{\Lambda^{-1}}{u_i} \psi = \sum_k \sum_{lm} \sum_{l'm'} (\lambda_{kl}^{-1} - \lambda_{kl'}^{-1}) \inner{\psi}{\eta_{kl'm'}} \inner{u_i Y_{l'm'}}{Y_{lm}} \eta_{klm}.
  \end{equation}
  Since \(u_i\) can be written as a linear combination of spherical harmonics of degree 1, \(\inner{u_i Y_{lm}}{Y_{l'm'}}\) vanishes if \(\abs{l - l'} > 1\). Using this, we obtain that
  \begin{equation}
    \abs{\lambda_{kl}^{-1} - \lambda_{kl'}^{-1}} = \order\bigl( l \, \lambda_{kl}^{-3} \bigr) \qquad \text{as \(k, l \to \infty\)}.
  \end{equation}
  Together with the boundedness of \(\inner{u_i Y_{l m}}{Y_{l'm'}}\), the assertion follows. Similarly,
  \begin{equation}
    \comm{\Lambda^{-1}}{\sgrad_i} \psi
    = \sum_k \sum_{lm} \sum_{l'm'} (\lambda_{kl}^{-1} - \lambda_{kl'}^{-1})
    \inner{\psi}{\eta_{kl'm'}} \inner{\sgrad_i Y_{l'm'}}{Y_{lm}} \eta_{klm}.
  \end{equation}
  Again, \(\inner{\sgrad_i Y_{l'm'}}{Y_{lm}}\) vanishes for \(\abs{l - l'} > 1\) and behaves like \(\order(l)\) for \(l \to \infty\), as can be seen from
  \begin{equation}
    \begin{aligned}
      \sum_i \abs[\big]{\inner{\sgrad_i Y_{l'm'}}{Y_{lm}}}^2 &\leq \sum_i \norm[\big]{\sgrad_i Y_{l'm'}}^2 \norm[\big]{Y_{lm}}^2 = \sum_i \norm[\big]{\sgrad_i Y_{l'm'}}^2 \\
      &= \norm[\big]{\sgrad Y_{l'm'}}^2 = l'(l'+1).
    \end{aligned}
  \end{equation}
  Together with \(\comm{\Lambda^{-1}}{\grad} = 0\), this shows the last assertion.
\end{proof}

We can now prove the embedding theorem. The proof uses techniques related to the proof of Hörmanders theorem on hypo-elliptic operators (c.f.~\cite{hormander}) found in~\cite{helffer-nier}.
\begin{theorem}
  \(\Hfib(\X)\) is a continuously embedded subspace of \(H^{1/2}(\X)\).
\end{theorem}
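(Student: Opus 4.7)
The plan is to combine the density result of Lemma~\ref{lemma:c-inf-dense-in-hfib} with an explicit subelliptic estimate on the sphere, which reflects the hypoelliptic character of the problem alluded to in the preamble. By density it suffices to prove $\|\psi\|_{H^{1/2}(\X)} \leq C \|\psi\|_\fib$ for $\psi \in \vspan\{\eta_{klm}\}$. Expanding $\psi = \sum_{k\in\Z^3} \xi_k \otimes g_k$ with $g_k = \sum_{lm} \inner{\psi}{\eta_{klm}} Y_{lm}$, I observe that $\Dfib$ acts on each summand by multiplication with $i\tilde k \cdot u$, where $\tilde k_j := 2\pi k_j/L_j$, while $\sgrad$ touches only the spherical factor. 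Consequently, $\|\psi\|^2$, $\|\Dfib\psi\|^2$, $\|\sgrad\psi\|^2$ and $\|\psi\|_{H^{1/2}}^2$ all split into orthogonal sums over $k$, and the embedding reduces to a uniform (in $k$) bound
\begin{equation*}
  \sum_{lm} \lambda_{kl}\, \abs{\inner{\psi}{\eta_{klm}}}^2 \leq C\Bigl(\|g_k\|^2 + \int_{S^2}(\tilde k\cdot u)^2 |g_k|^2\,du + \|\sgrad g_k\|^2\Bigr).
\end{equation*}
The purely angular piece $\sum_{lm} l\,\abs{\inner{\psi}{\eta_{klm}}}^2$ is absorbed into $\|g_k\|^2+\|\sgrad g_k\|^2$ via $l \leq \tfrac{1}{2}(1+l(l+1))$, and $\lambda_{kl} \leq 1 + |k| + l$, so the crux becomes a subelliptic estimate on $S^2$: for every $v \in \R^3$ and every $g \in H^1(S^2)$, with constant $C$ independent of $v$,
\begin{equation*}
  \abs{v}\,\|g\|_{L^2(S^2)}^2 \leq C\Bigl(\|g\|^2 + \int_{S^2}(v\cdot u)^2 |g|^2\,du + \|\sgrad g\|^2\Bigr).
\end{equation*}

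For $|v|\leq 1$ this is trivial. For $|v|=\tau \geq 1$, rotate so $v = \tau e_3$ and split $S^2$ into the equatorial strip $I_\tau := \{u \in S^2 : |u_3| < \tau^{-1/2}\}$ and its complement $J_\tau$. On $J_\tau$ one has $u_3^2 \geq \tau^{-1}$, hence $\tau \int_{J_\tau} |g|^2\,du \leq \tau^2 \int_{S^2} u_3^2 |g|^2\,du$, absorbed by the $\Dfib$-term. On $I_\tau$, parametrize by $(\theta,\phi)$ and, for $\theta_0$ in the adjacent slice $I_\tau^+ := [\pi/2 + \tau^{-1/2},\, \pi/2 + 2\tau^{-1/2}]$, the fundamental theorem of calculus combined with Cauchy--Schwarz gives
\begin{equation*}
  |g(\theta,\phi)|^2 \leq 2 |g(\theta_0,\phi)|^2 + 6 \tau^{-1/2} \int_{I_\tau \cup I_\tau^+} |\partial_\theta g(\theta',\phi)|^2\,d\theta'.
\end{equation*}
Averaging over $\theta_0 \in I_\tau^+$ and integrating over $(\theta,\phi) \in I_\tau \times [0,2\pi]$, using $\sin\theta \sim 1$ on both strips together with $|\partial_\theta g|^2 \leq |\sgrad g|^2$, yields $\tau \int_{I_\tau} |g|^2\,du \leq C \tau \int_{I_\tau^+} |g|^2\,du + C \|\sgrad g\|^2$. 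Since $I_\tau^+ \subset J_\tau$, the first term is controlled as above, closing the estimate. Summing the resulting uniform-in-$k$ inequality over $k \in \Z^3$ yields the continuous embedding $\Hfib(\X) \hookrightarrow H^{1/2}(\X)$.

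The main obstacle is the subelliptic estimate on $S^2$: the multiplier $v\cdot u$ degenerates on the codimension-one great circle $\{u \perp v\}$, and one must carefully balance the loss near this degeneracy against the angular gain coming from $\sgrad$. Geometrically, this reflects the fact that $\sgrad_i$ and $\Dfib$ form a step-two H\"ormander system, since the commutator identity $\comm{\sgrad_i}{\Dfib}\psi = \partial_{x_i}\psi - u_i\Dfib\psi$ recovers the full spatial gradient from a single bracket -- precisely why only half a derivative can be gained, giving $H^{1/2}$ rather than $H^1$.
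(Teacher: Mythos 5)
Your proof is correct, but it takes a genuinely different route from the paper's. The paper argues abstractly via commutators in the spirit of H\"ormander's hypoellipticity theorem (following Helffer--Nier): it sets \(D_1 := \comm{\sgrad}{\Dfib} = (1-uu^T)\grad\), derives the identity \(\Lambda^2 = 1 + D_0^*D_0 + D_1^*D_1\), and then bounds \(\norm{\Lambda^{1/2}\psi}\) by moving operators across inner products and invoking the mapping properties of \(\comm{\Lambda^{-1}}{u_i}\) and \(\comm{\Lambda^{-1}}{\sgrad_i}\) from Lemma~\ref{lemma:regularity-of-commutators}. You instead exploit the fact that the coefficients of \(\Dfib\) and \(\sgrad\) are independent of \(x\), so the spatial Fourier decomposition diagonalizes all four quadratic forms and the embedding collapses to a uniform one-parameter family of weighted estimates on \(S^2\), namely \(\abs{v}\norm{g}^2 \le C(\norm{g}^2 + \norm{(v\cdot u)\,g}^2 + \norm{\sgrad g}^2)\), which you prove by the classical thin-strip argument at the critical width \(\abs{v}^{-1/2}\). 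Your reduction is sound (\(\lambda_{kl}\le 1+\abs{k}+l\), the \(l\)-part absorbed by \(\norm{g}^2+\norm{\sgrad g}^2\), the \(\abs{k}\)-part by the subelliptic estimate with \(v=\tilde k\)), and the strip argument balances exactly as it must: off the strip \((v\cdot u)^2\ge\tau\), on the strip the Poincar\'e constant is the squared width \(\tau^{-1}\), which is the mechanism producing the exponent \(1/2\). Two cosmetic points, neither a gap: the inclusion \(I_\tau^+\subset J_\tau\) is not literally true (since \(\sin s<s\), a point of \(I_\tau^+\) only satisfies \(\abs{u_3}\ge(2/\pi)\tau^{-1/2}\)), but \(u_3^2\gtrsim\tau^{-1}\) there is all you actually use; and identifying \(\{\abs{u_3}<\tau^{-1/2}\}\) with a \(\theta\)-interval of width \(\order(\tau^{-1/2})\) on which \(\sin\theta\sim 1\) requires \(\tau\) bounded below by a fixed constant, which is harmless because small \(\tau\) is trivial. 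As for what each approach buys: yours is elementary and self-contained, makes the origin and the sharpness of the gain of half a derivative transparent (functions concentrated on the equatorial strip of width \(\tau^{-1/2}\) saturate your estimate), while the paper's commutator argument never uses translation invariance in \(x\) and therefore survives perturbations --- variable coefficients or non-flat spatial geometry --- that would destroy the Fourier diagonalization. Your closing observation that \(\comm{\sgrad_i}{\Dfib}\) recovers the full spatial gradient is precisely the paper's operator \(D_1\).
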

\begin{proof}
  Assume first that \(\psi \in \Cper(\X)\). Let \(D_1 := \comm{\sgrad}{\Dfib}\). Then, since \(\sgrad (a^T \cdot)(u) = (1 - u u^T) a\) for \(a \in \R^3\), we have
  \begin{equation}
    \begin{split}
      D_1 \psi &= \sum_{i=1}^3 (\sgrad u_i) \grad_i \psi \\
      &= (1 - u u^T) \grad \psi.
    \end{split}
  \end{equation}
  This implies
  \begin{equation}
    \norm[\big]{\Dfib \psi}^2 + \norm[\big]{D_1 \psi}^2 = \norm[\big]{\grad \psi}^2
  \end{equation}
  and hence
  \begin{equation}
    \Lambda^2 = 1 + D_0^* D_0 + D_1^* D_1.
  \end{equation}
  Thus
  \begin{equation}\label{eq:estimate-1}
    \begin{split}
      {}& \norm[\big]{\Lambda^{1/2} \psi}^2 = \inner[\big]{\psi}{\Lambda^{-1} \Lambda^2 \psi} = \norm[\big]{\Lambda^{-1/2} \psi}^2 + \sum_{i = 0}^1 \inner[\big]{\psi}{\Lambda^{-1} D_i^* D_i \psi} \\
      = {}& \norm[\big]{\Lambda^{-1/2} \psi}^2 + \sum_{i = 0}^1 \inner[\big]{D_i \psi}{\Lambda^{-1} D_i \psi} + \inner[\big]{\psi}{ \comm{\Lambda^{-1}}{D_i^{*}} \Lambda^{1/2} \Lambda^{-1/2} D_i \psi } \\
      \leq {}& \norm[\big]{\psi}^2 + \sum_{i = 0}^1 \norm[\big]{\Lambda^{-1/2} D_i \psi}^2 + c \norm[\big]{\psi} \norm[\big]{\Lambda^{-1/2} D_i \psi} \\
      \leq {}& c {\bigl( \norm{\psi} + \norm{D_0 \psi} + \norm{\Lambda^{-1/2} D_1 \psi} \bigr)}^2, \\
    \end{split}
  \end{equation}
  where all generic constants are denoted by \(c\) for simplicity. We used that \(\Lambda^{-1/2}\) and \(\comm{\Lambda^{-1}}{D_i^*} \Lambda^{1/2}\) (\(i=1, 2\)) are bounded. The latter follows using the regularity properties in Lemma~\ref{lemma:regularity-of-commutators}, the Leibniz rule for commutators, i.e.\ \(\comm{A}{B C} = \comm{A}{B} C + B \comm{A}{C}\), and the expression
  \begin{equation}\label{eq:adjoint-spherical-grad-components}
    \sgrad_i^* \psi = -\sgrad_i \psi + 2 u_i \psi,
  \end{equation}
  for the (formal) adjoint of the \(i\)-th component of \(\sgrad\). This in turn can be deduced using partial integration and the relation
  \begin{equation}
    \begin{gathered}
      \int\limits_{S^2} \sgrad_i f(u) \,du = \int\limits_{S^2} {(\sgrad u_i)}^T \sgrad f(u) \,du \\
      = -\int\limits_{S^2} (\Laplace_{S^2} u_i) f(u) \,du = 2 \int\limits_{S^2} u_i f(u) \,du,
    \end{gathered}
  \end{equation}
  where \(\Laplace_{S^2} = -\sgrad^* \circ \sgrad\) and \(\Laplace_{S^2} u_i = -2 u_i\) was used. For the \(D_1\)-term in~\eqref{eq:estimate-1}, we have
  \begin{equation}
    \begin{split}
      {}& \norm[\big]{\Lambda^{-1/2} D_1 \psi}^2 = \sum_{i=1}^3 \inner[\big]{\Lambda^{-1} D_{1,i} \psi}{D_{1,i} \psi} \\
      \leq {}& \sum_{i=1}^3 \abs[\big]{ \inner{\Lambda^{-1} D_{1,i} \psi}{\Dfib \sgrad_i \psi} } + \abs[\big]{ \inner{\Lambda^{-1} D_{1,i} \psi}{\sgrad_i \Dfib \psi} }.
    \end{split}
  \end{equation}
  The first term of this can be estimated by
  \begin{equation}
    \begin{aligned}
      {}& \abs[\big]{ \inner{\Lambda^{-1} D_{1,i} \psi}{\Dfib \sgrad_i \psi} } \\
      \leq {}& \abs[\big]{ \inner{\Lambda^{-1} D_{1,i} \Dfib^* \psi}{\sgrad_i \psi} } + \abs[\big]{ \inner{ \comm{\Dfib^*}{\Lambda^{-1} D_{1,i}} \psi }{\sgrad_i \psi} } \\
      \leq {}& c \bigl( \norm{\Dfib^* \psi}\norm{\sgrad_i \psi} + \norm{\psi} \norm{\sgrad_i \psi} \bigr) \\
      \leq {}& c {\bigl( \norm{\psi} + \norm{D_0 \psi} \bigr)}^2,
    \end{aligned}
  \end{equation}
  where where \(\Dfib^* = -\Dfib\) and the boundedness of \(\Lambda^{-1} D_{1,i}\) and \(\comm{\Dfib^*}{\Lambda^{-1} D_{1,i}}\) was used. Similarly,
  \begin{equation}
    \begin{aligned}
      {}& \abs[\big]{ \inner{\Lambda^{-1} D_{1,i} \psi}{\sgrad_i \Dfib \psi} } \\
      \leq {}& \abs[\big]{ \inner{\Lambda^{-1} D_{1,i} \sgrad_i^* \psi}{\Dfib \psi} } + \abs[\big]{ \inner{\comm{\sgrad_i^*}{\Lambda^{-1} D_{1,i}} \psi}{\Dfib \psi} } \\
      \leq {}& c \bigl( \norm{\sgrad_i^* \psi}\norm{\Dfib \psi} + \norm{\psi} \norm{\Dfib \psi} \bigr),
    \end{aligned}
  \end{equation}
  since \(\comm{\sgrad_i^*}{\Lambda^{-1} D_{1,i}}\) is bounded. Using~\eqref{eq:adjoint-spherical-grad-components} again, the commutator between \(\sgrad_i\) and its adjoint is
  \begin{equation}
    \comm{\sgrad_i}{\sgrad_i^*} \psi = 2 (1 - u_i^2) \psi.
  \end{equation}
  Hence,
  \begin{equation}
    \abs[\big]{ \norm{\sgrad_i^* \psi}^2 - \norm{\sgrad_i \psi}^2 } = \abs[\big]{ \inner{\psi}{\comm{\sgrad_i^*}{\sgrad_i} \psi} } \leq 2 \norm[\big]{\psi}^2,
  \end{equation}
  so the second term can be estimated further by
  \begin{equation}
    \abs[\big]{ \inner{\Lambda^{-1} D_{1,i} \psi}{\sgrad_i \Dfib \psi} } \leq c {\bigl( \norm{\psi} + \norm{D_0 \psi} \bigr)}^2.
  \end{equation}
  Putting everything together, we obtain
  \begin{equation}
    \norm[\big]{\psi}_{H^{1/2}(\X)} = \norm[\big]{\Lambda^{1/2} \psi} \leq c \norm[\big]{\psi}_\fib
  \end{equation}
  for \(\psi \in \Cper(\X)\). Finally, Lemma~\ref{lemma:c-inf-dense-in-hfib} implies that the estimate also holds for arbitrary \(\psi \in \Hfib(\X)\).
\end{proof}

\begin{corollary}
  \(\Hfib(\X)\) is compactly embedded in \(L^2(\X)\).
\end{corollary}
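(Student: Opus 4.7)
The plan is to chain the continuous embedding just proved in the preceding theorem with the standard compact embedding $H^{1/2}(\X) \hookrightarrow L^2(\X)$, since a composition of a continuous and a compact operator is compact.

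The only thing requiring justification is the compactness of $H^{1/2}(\X) \hookrightarrow L^2(\X)$. I would prove this directly from the definition of $H^{1/2}(\X)$ via the basis $\{\eta_{klm}\}$ rather than invoking a general Rellich--Kondrachov theorem. Concretely, given a bounded sequence $(\psi_n) \subset H^{1/2}(\X)$, the Fourier-type coefficients $\inner{\psi_n}{\eta_{klm}}$ are bounded uniformly in $n$ for each fixed $(k,l,m)$, so a diagonal extraction yields a subsequence for which each coefficient converges. For any cutoff $N$, truncating at $\lambda_{kl} \leq N$ gives convergence in $L^2(\X)$, while the tail is controlled by
\begin{equation}
  \sum_{\lambda_{kl} > N} \abs[\big]{\inner{\psi_n - \psi_m}{\eta_{klm}}}^2 \leq \frac{1}{N} \norm[\big]{\psi_n - \psi_m}_{H^{1/2}(\X)}^2,
\end{equation}
which is small uniformly in $n,m$ since $H^{1/2}$-norms are bounded. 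Letting first $n,m \to \infty$ and then $N \to \infty$ shows the extracted subsequence is Cauchy in $L^2(\X)$.

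The main (and only) step worth any thought is this compactness of $H^{1/2} \hookrightarrow L^2$; everything else is immediate from the preceding theorem. Since this compactness is a well-known fact on the compact manifold $\mathbb{T}^3 \times S^2$ (the spatial periodicity built into the definition of $\eta_{klm}$ makes $\Omega$ effectively a torus), I would simply cite it or sketch the diagonal argument above, and conclude that the embedding $\Hfib(\X) \hookrightarrow H^{1/2}(\X) \hookrightarrow L^2(\X)$ is compact. No obstacle is anticipated.
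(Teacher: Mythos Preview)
Your proposal is correct and follows exactly the paper's approach: the paper's proof consists of the single sentence ``This follows immediately from the fact that \(H^s(\X)\) is compactly embedded in \(L^2(\X)\) for any \(s > 0\),'' which is precisely the chaining you describe. Your additional diagonal-extraction sketch for the compactness of \(H^{1/2}(\X) \hookrightarrow L^2(\X)\) is a reasonable justification of what the paper takes for granted, but it is not required and the paper does not include it.
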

\begin{proof}
  This follows immediately from the fact that \(H^s(\X)\) is compactly embedded in \(L^2(\X)\) for any \(s > 0\).
\end{proof}

In particular, \(T\) is compact since it is continuous on \(L^2(\X)\). Hence, the following holds true:
\begin{corollary}\label{cor:pointwise-convergence}
  \(\norm{T - T_h} \to 0\) if and only if \(P_h \to 1\) pointwise on \(\range(T^*)\) and \(Q_h \to 1\) pointwise on \(\range(T)\).
\end{corollary}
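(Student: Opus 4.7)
The plan is to reduce this to the standard duality between operator-norm approximation and pointwise convergence on the range of a compact operator. The crucial input is that \(T \colon \Hfib(\X) \to L^2(\X)\) is compact: the embedding \(\Hfib(\X) \hookrightarrow L^2(\X)\) is compact by the preceding corollary and \(T\) is bounded on \(L^2(\X)\), so both \(T\) and \(T^*\) are compact. This is what makes the argument go; without it, even pointwise convergence of \(P_h\) and \(Q_h\) would not upgrade to norm convergence.

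For the ``pointwise \(\Rightarrow\) norm'' direction I would split
\begin{equation}
T - T_h = (I - Q_h) T + Q_h T (I - P_h),
\end{equation}
use \(\norm{Q_h} \leq 1\), and estimate each piece via the standard fact: if \(K\) is a compact operator between Hilbert spaces and \((R_h)\) is a family of orthogonal projections with \(R_h \to I\) pointwise on \(\range(K)\), then \(\norm{(I - R_h) K} \to 0\). Applied once with \(K = T\), \(R_h = Q_h\) this handles \(\norm{(I-Q_h)T}\); applied with \(K = T^*\), \(R_h = P_h\) it handles \(\norm{T(I-P_h)} = \norm{(I - P_h) T^*}\). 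The fact itself is proved by observing that \(K\) maps the unit ball to a relatively compact set, extending the pointwise convergence from \(\range(K)\) to its closure via \(\norm{I - R_h} \leq 1\), and then upgrading pointwise to uniform convergence by a finite \(\epsilon\)-net argument on the compact set \(\overline{K(B)}\).

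For the converse direction I would exploit the observation that
\begin{equation}
(T - T_h)\psi = (I - Q_h) T\psi + Q_h T(I - P_h)\psi
\end{equation}
is an \emph{orthogonal} decomposition in \(L^2(\X)\), because the first summand lies in \((\range Q_h)^\perp\) and the second in \(\range Q_h\). Pythagoras then gives
\begin{equation}
\norm{(T - T_h)\psi}^2 = \norm{(I - Q_h) T\psi}^2 + \norm{Q_h T(I - P_h)\psi}^2,
\end{equation}
so \(\norm{T - T_h} \to 0\) immediately forces \((I - Q_h) T \psi \to 0\) for every \(\psi\), i.e.\ \(Q_h \to I\) pointwise on \(\range(T)\). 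Replaying exactly the same Pythagorean trick on the adjoint side,
\begin{equation}
(T^* - T_h^*) \phi = (I - P_h) T^* \phi + P_h T^*(I - Q_h) \phi,
\end{equation}
whose pieces are orthogonal in \(\Hfib(\X)\), together with \(\norm{T - T_h} = \norm{T^* - T_h^*}\), yields \((I - P_h) T^* \phi \to 0\) for every \(\phi\), which is the remaining pointwise convergence on \(\range(T^*)\). The only real obstacle is making the \(\epsilon\)-net step in the forward direction fully rigorous --- extending pointwise convergence from \(\range K\) to its closure, choosing a finite cover, and bounding the approximation error uniformly in \(h\) using \(\norm{I - R_h} \leq 1\) --- but this is routine operator-theoretic bookkeeping rather than a genuine difficulty.
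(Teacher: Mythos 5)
Your proof is correct and is exactly the standard argument the paper has in mind: the authors state this corollary without proof, relying only on the compactness of \(T\) (noted in the preceding sentence), and your decomposition \(T - T_h = (I-Q_h)T + Q_hT(I-P_h)\) together with the compactness-based upgrade from pointwise to norm convergence, plus the Pythagorean/adjoint argument for the converse, is the canonical way to fill in the details. Nothing in your write-up deviates from or conflicts with what the paper intends.
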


Hence, under the assumptions of Theorem~\ref{thm:convergence-lu-flemming} and Corollary~\ref{cor:pointwise-convergence} and a parameter choice rule \(\alpha(\delta, h)\) satisfying \(\alpha(\delta, h) \geq \norm{T - T_h}^2\), it follows that
\begin{equation}
  \norm[\big]{\psi^\delta_{\alpha(\delta, h), h} - \psi^\dagger}_\fib \to 0 \quad\text{as \(\delta, h \to 0\)}.
\end{equation}

\section{Implementation}
\label{sec:implementation}

For the numerical implementation, we use the basis introduced in~\eqref{eq:basis-definition} above\footnote{More precisely, we use a slightly modified basis consisting of real-valued linear combinations of~\eqref{eq:basis-definition}.} and discretize by truncating to \({(\eta_{klm})}\) for \(\abs{k} \leq K_h\) and \(l \leq L_h\). The same basis is used in data space. Let \(P_h\) and \(Q_h\) denote the orthogonal projections onto \(X_h := \vspan\{\eta_{klm} \colon \abs{k} \leq K_h, l \leq L_h\}\) with respect to the norms \(\norm{\cdot}_\fib\) and \(\norm{\cdot}_{L^2}\), respectively. The pointwise convergence $\norm{P_h \psi - \psi}_\fib \to 0$ for all \(\psi \in \Hfib(\X)\) and $\norm{Q_h \psi - \psi}_\fib \to 0$ for all \(\psi \in L^2(\X)\) follows from Lemma~\ref{lemma:c-inf-dense-in-hfib} if \(K_h, L_h \to \infty\) as \(h \to 0\). We will need the Gramian matrix
\begin{equation}
  G_{klm, k'l'm'} = \inner{\eta_{klm}}{\eta_{k'l'm'}}_\fib
\end{equation}
for this basis. The computation of \(G\) requires evaluation of integrals of the form
\begin{equation}
  \int\limits_{S^2} u_i u_j Y_{lm}(u) \overline{Y_{l'm'}(u)} \,du.
\end{equation}
These can be calculated either by numerical quadrature, or using explicit --- albeit rather tedious --- formulas for Clebsch-Gordan coefficients.

We first numerically test the convergence of \(P_h\). Let \(\iota \colon \Hfib(\X) - L^2(\X)\) be the embedding. Then we want to investigate the norm convergence \(\norm{\iota \circ P_h \to \iota} \to 0\) as \(h \to 0\). The problem here is that for \(\psi \in \Hfib(\X)\), one needs to quantify \(\psi - P_h \psi\), i.e.\ its component outside of the discrete subspace. We approximate this by choosing a fine discretization \(X_0 := \vspan\{\eta_{klm} \colon \abs{k} \leq K_0, l \leq L_0\}\) for some sufficiently large \(K_0\), \(L_0\) and use this in place of the infinite-dimensional \(\Hfib(\X)\). Then \(P_h\) and the norm above can be evaluated numerically by taking advantage of the fact that the Gramian matrices of \(X_0\) and \(X_h\) decouple with respect to the spatial part \(\xi_k\) of the basis functions. In Figure~\ref{fig:convergence}, the results for various \(L_h\) are plotted against the spatial frequency cutoff \(K_h\) for norms with and without the additional Laplace-Beltrami operator, clearly showing convergence almost independently of the chosen \(L_h\). The figure also shows the results of the same numerical test when omitting the Laplace-Beltrami penalty. While these also decrease for large enough \(K_h\), the effect becomes weaker as the SH order \(L_h\) increases. This suggests that the observed decrease can be seen as an artifact of the Spherical Harmonics themselves introducing a smoothing effect (i.e.\ regularization by discretization), and hence convergence speed with respect to \(K_h\) deteriorates with increasing \(L_h\). Moreover, this might explain why good results can be observed even without additional explicit angular regularization.

\begin{figure}
  \centering
  \input{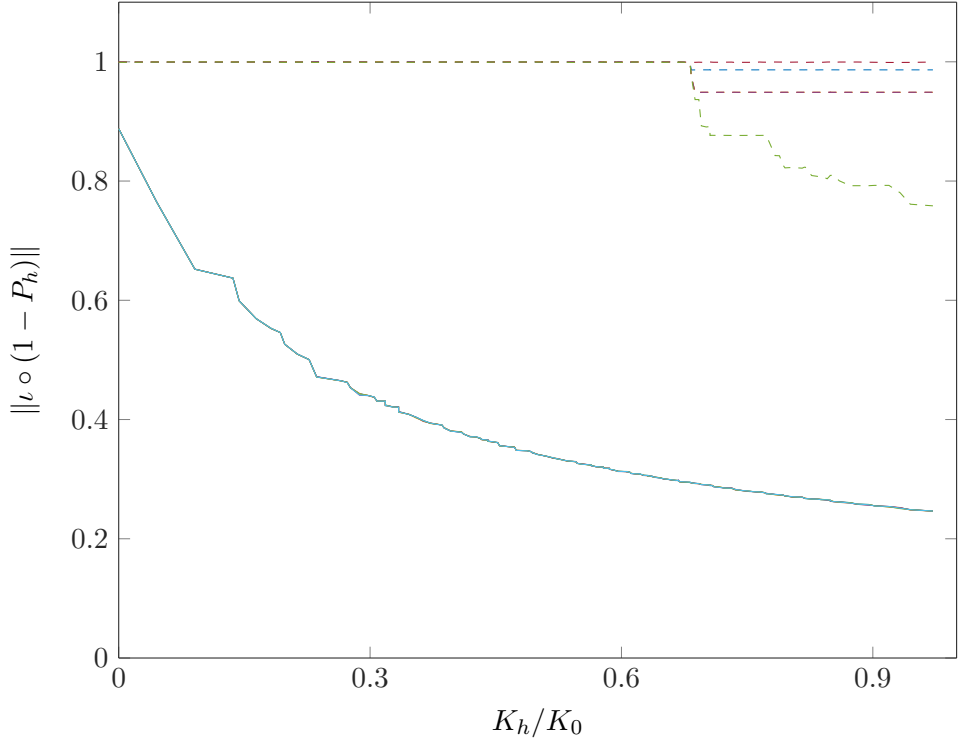}
  \caption{Convergence of the projection \(P_h\) viewed as a map \(\Hfib(\X) \to L^2(\X)\) (\(\iota\) is the embedding) plotted against the spatial cutoff frequency for even SH orders \(10 \leq L_h \leq 20\). \emph{Solid}: Including the angular regularization term in~\eqref{eq:def-fib-norm} (the curves are almost indistinguishable). \emph{Dashed}: Without the angular regularization term; SH order is increasing from bottom up.}\label{fig:convergence}
\end{figure}

For ODF reconstruction, the projected ODF and the forward operator are expanded in the discrete basis as
\begin{equation}
  P_h \psi = \sum_{klm} z_{klm} \eta_{klm}
\end{equation}
and
\begin{equation}
  T_h \eta_{klm} = Q_h T \eta_{klm} = \sum_{k'l'm'} {(B_h)}_{k'l'm', klm} \eta_{k'l'm'}.
\end{equation}
For the spherical convolution model, \(B_h\) is diagonal. Similarly, the discrete data is expanded as
\begin{equation}
  Q_h S^\delta = \sum_{klm} y^\delta_{klm} \eta_{klm}.
\end{equation}
Note that this can only be approximated in practice.

A method to implement the constrained spherical deconvolution problem~\eqref{eq:discrete-optimization-problem} was proposed in~\cite{tournier-et-al-2007}. In the following, we will describe this method and interpret it as a semi-smooth Newton method. This will prove local convergence of the method by general convergence results for semi-smooth Newton methods.

Instead of requiring \(P_h \psi \geq 0\), the constraint is checked only on a finite subset \({\{(x_i, u_j)\}}_{ij} \subset \X\),
\begin{equation}
  {(H z)}_{ij} := \sum_{klm} z_{klm} \eta_{klm}(x_i, u_j) \geq 0.
\end{equation}
The points \(x_i\) are usually chosen as the spatial grid on which the data \(S^\delta\) are given, while \(\{u_j\}\) is some point set on the sphere obtained e.g.\ by subdividing platonic solids. The constraint is only implemented approximately by an iteration of the form
\begin{equation}\label{eq:tournier-step}
  z_{k+1} = \argmin_z \left(\norm[\big]{B_h z - y^\delta}^2 + \alpha \norm[\big]{\sqrt{G} z}^2 + c \norm[\big]{\theta(- H z_k) \cdot H z}^2 \right),
\end{equation}
where \(\theta\) is the Heaviside step function and \(\cdot\) denotes element-wise multiplication. The constraint is approximated by a quadratic penalty on the set where the previous iterate violated it, i.e.\ \(H z_k < 0\). The parameter \(c > 0\) determines how strongly the constraint is enforced. This method is equivalent to applying the semi-smooth Newton method from~\cite{hintermueller-ito-kunisch} to the relaxed problem
\begin{equation}\label{eq:moreau-yosida}
  \argmin_{H z \geq w} \left(\norm[\big]{B_h z - y^\delta}^2 + \alpha \norm[\big]{\sqrt{G} z}^2 + c \norm[\big]{w}^2 \right).
\end{equation}
The equivalence can be seen by noting that the necessary and sufficient first order optimality conditions for~\eqref{eq:moreau-yosida} can be written as (see the reference cited above for more details)
\begin{equation}
  \begin{gathered}
    (B_h^* B_h + \alpha G) z - H^* \lambda = B_h^* y^\delta \\
    c w + \lambda = 0 \\
    \lambda = \max(0, \lambda - c (H z - w)),
  \end{gathered}
\end{equation}
which is the same as
\begin{equation}
  (B_h^* B_h + G) z + c H^* \min(0, H z) = B_h^* y^\delta.
\end{equation}
Finally,~\eqref{eq:tournier-step} is precisely a semi-smooth Newton step for this equation. The minimization problem~\eqref{eq:moreau-yosida} can be interpreted as Moreau-Yosida regularization of the constraint \(H z \geq 0\). Convergence of both the semi-smooth Newton method for \(k \to \infty\) and the Moreau-Yosida regularization for \(c \to \infty\) have been investigated in~\cite{hintermueller-ito-kunisch,ito-kunisch}. For the numerical experiments, we will use this method since it is efficient and easy to implement. It allows for comparison of the regularized method with the original CSD method, which is included as the case \(\alpha = 0\).

\section{Numerical experiments}
\label{sec:numerics}

Performance of the method was tested on a physical phantom and an \emph{in vivo} measurement. Both data sets were obtained by other groups.

Construction of the phantom and data acquisition is described in more detail in~\cite{fillard-et-al}. The data was acquired on a \(64 \times 64 \times 3\)-grid with \(\SI{3}{mm}\) isotropic voxel spacing, using 64 gradient directions and \(b = \SI{1500}{s/mm^2}\). The structure of the phantom is shown in figure~\ref{fig:fc}, together with three regions of interest --- two crossings and a curved structure. Reconstructions were performed on the whole data set, without employing a ``white'' matter mask. The SH coefficients of the convolution kernel up to order 8 were estimated from the data using the \emph{MRtrix} software package.\footnote{Available at \url{http://www.brain.org.au/software/}. Developed at Brain Research Institute, Melbourne, Australia. Further information can also be found in in~\cite{tournier-et-al-2012}.} The ODF was reconstructed in Spherical Harmonics up to order 12. The parameter \(c\) was simply chosen as \(c = 1\). Figure~\ref{fig:fc-compare} shows reconstruction results for these regions using four regularization strategies: unregularized constrained spherical deconvolution (CSD), deconvolution with fiber continuity penalty (CSD-FC), with additional angular Laplace-Beltrami penalty (CSD-FC+LB), and CSD with isotropic spatial regularization (CSD-iso). For the crossing regions, the spatial penalties clearly improve spatial coherence and resolution of the crossing compared to the unregularized reconstruction. Compared to the isotropic penalty, the FC penalty significantly reduces artifacts perpendicular to the fibers. For the curved structure, the FC penalty causes some visible artifacts tangential to the structure. These are reduced somewhat by the additional angular penalty, essentially by blurring the spurious crossings. This of course also limits the achievable angular resolution. It also produces some artifacts perpendicular to the fibers since the embedding into \(H^{1/2}(\X)\) implies some degree of isotropic smoothness. Therefore, the corresponding regularization parameter should not be chosen too large.

\begin{figure}
  \centering
  \includegraphics[width=.4\textwidth]{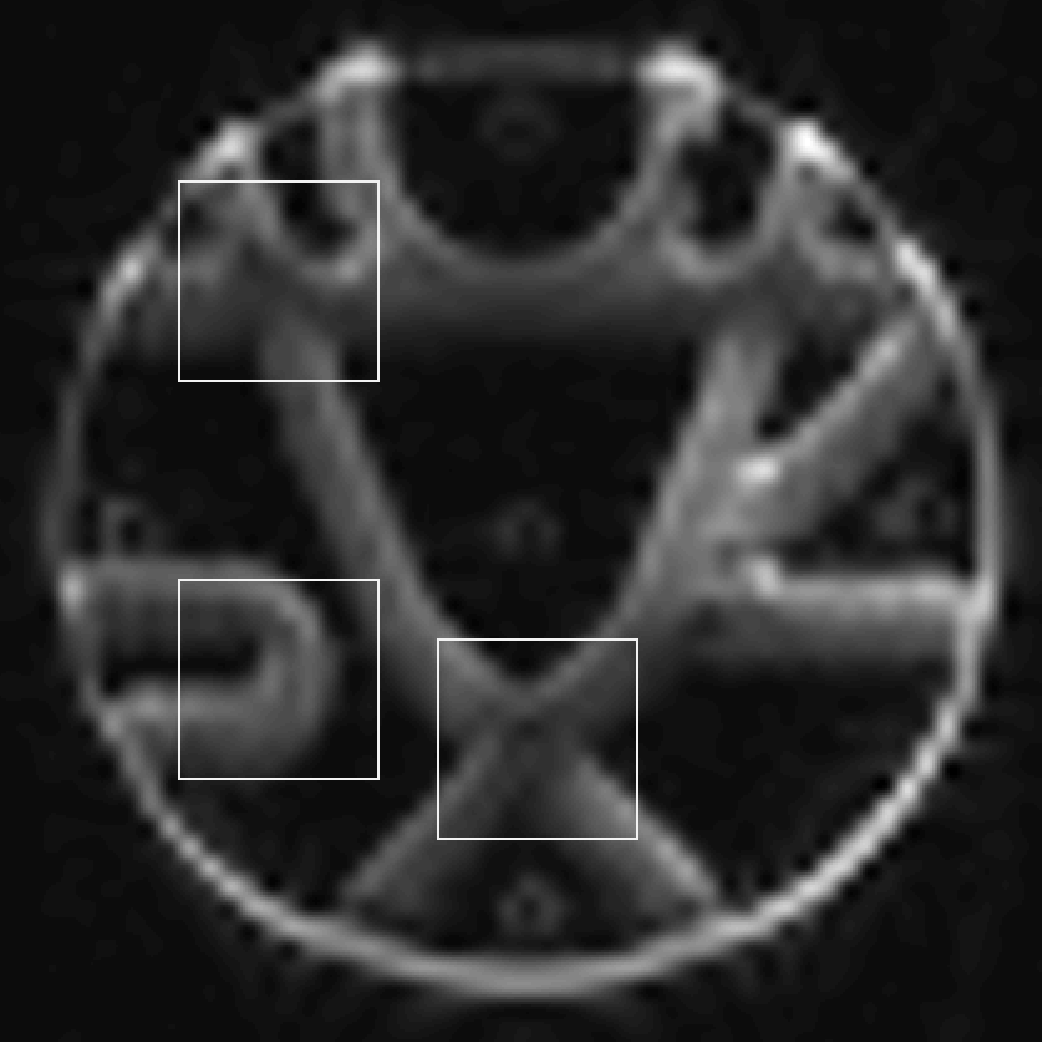}
  \caption{Structure of the physical phantom (\(b = 0\) image). Reconstructions for the highlighted regions are depicted in figure~\ref{fig:fc-compare}.}\label{fig:fc}
\end{figure}

\begin{figure}
  \centering
  \begin{tikzpicture}[node distance=0.5em,inner sep=0pt]
    \node (l2 cross1)
    {\includegraphics[width=.3\textwidth]{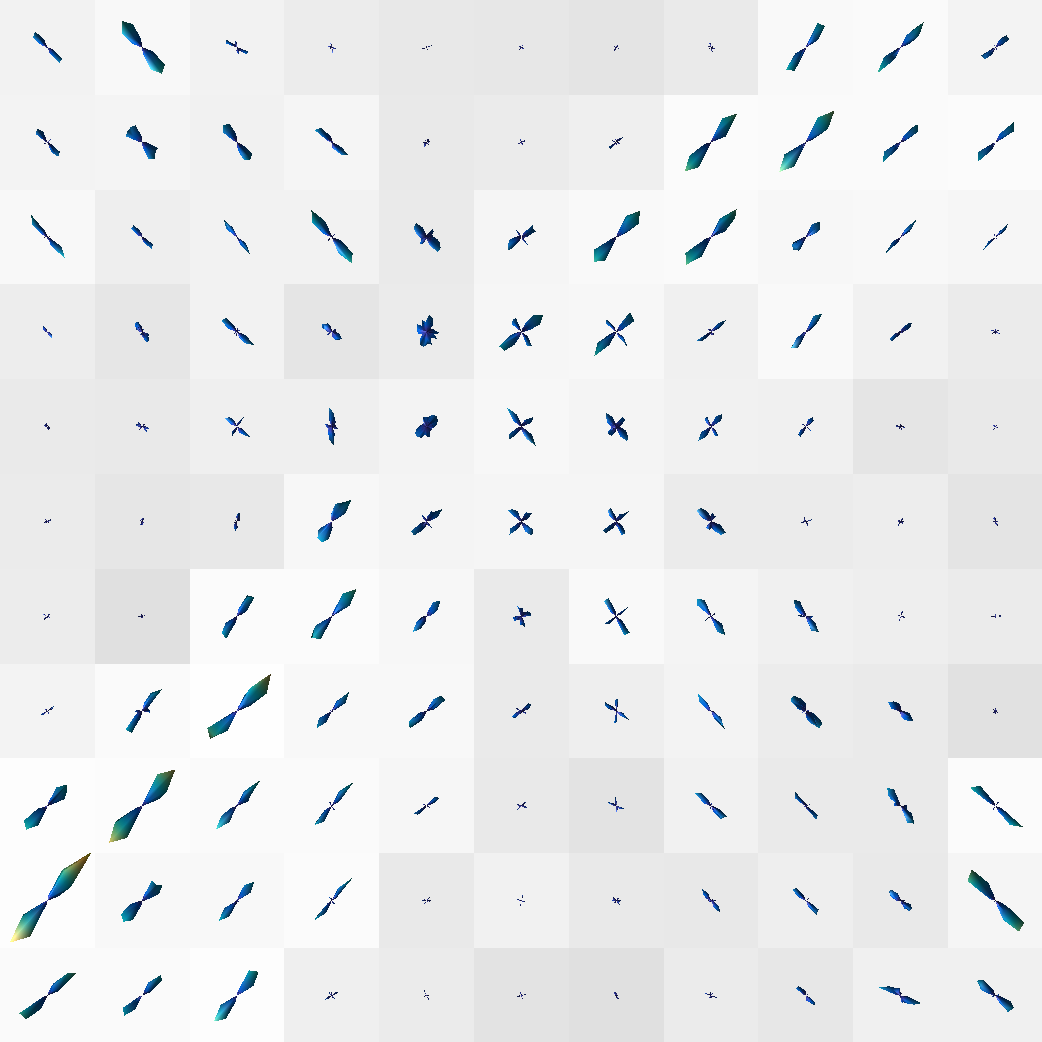}};
    \node (l2 curve) [right=of l2 cross1.east,anchor=west]
    {\includegraphics[width=.3\textwidth]{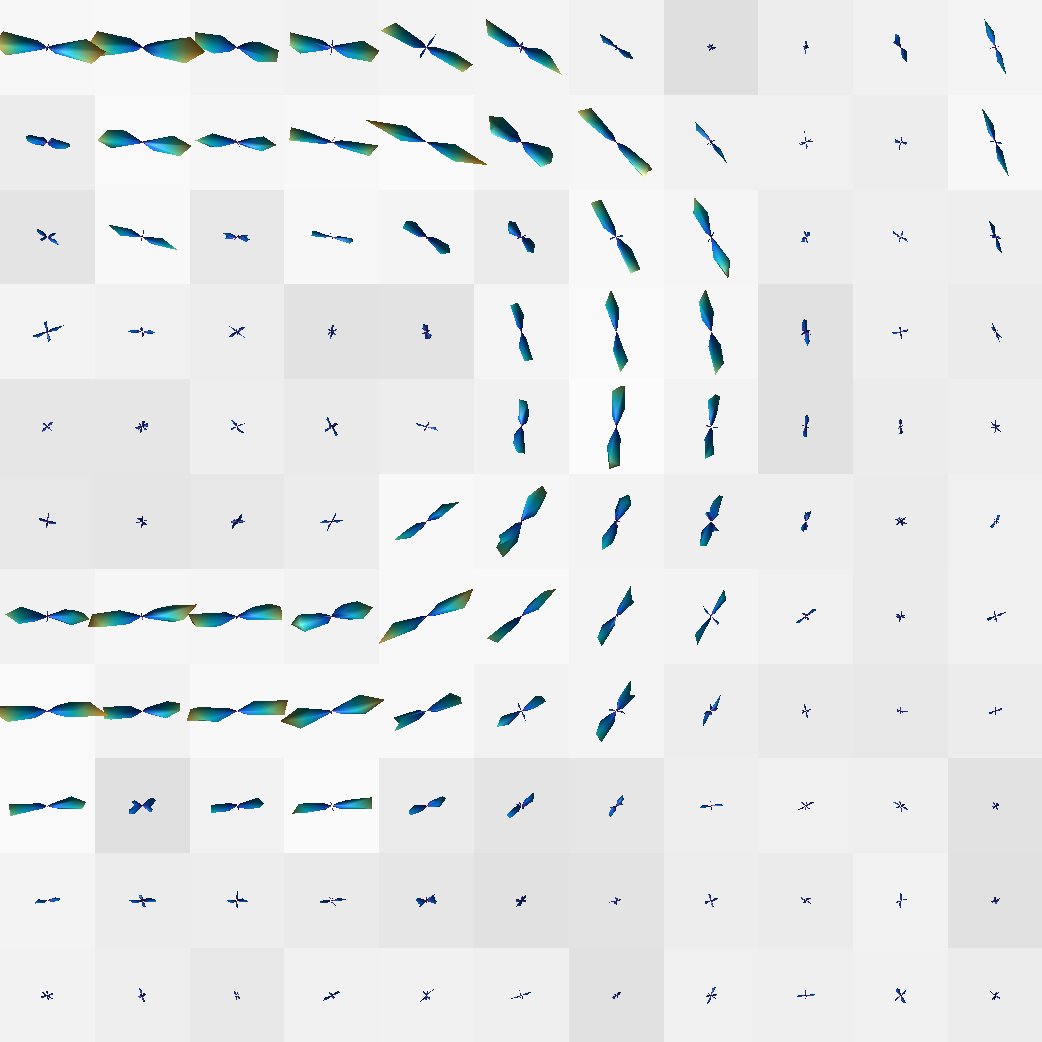}};
    \node (l2 cross2) [right=of l2 curve.east,anchor=west]
    {\includegraphics[width=.3\textwidth]{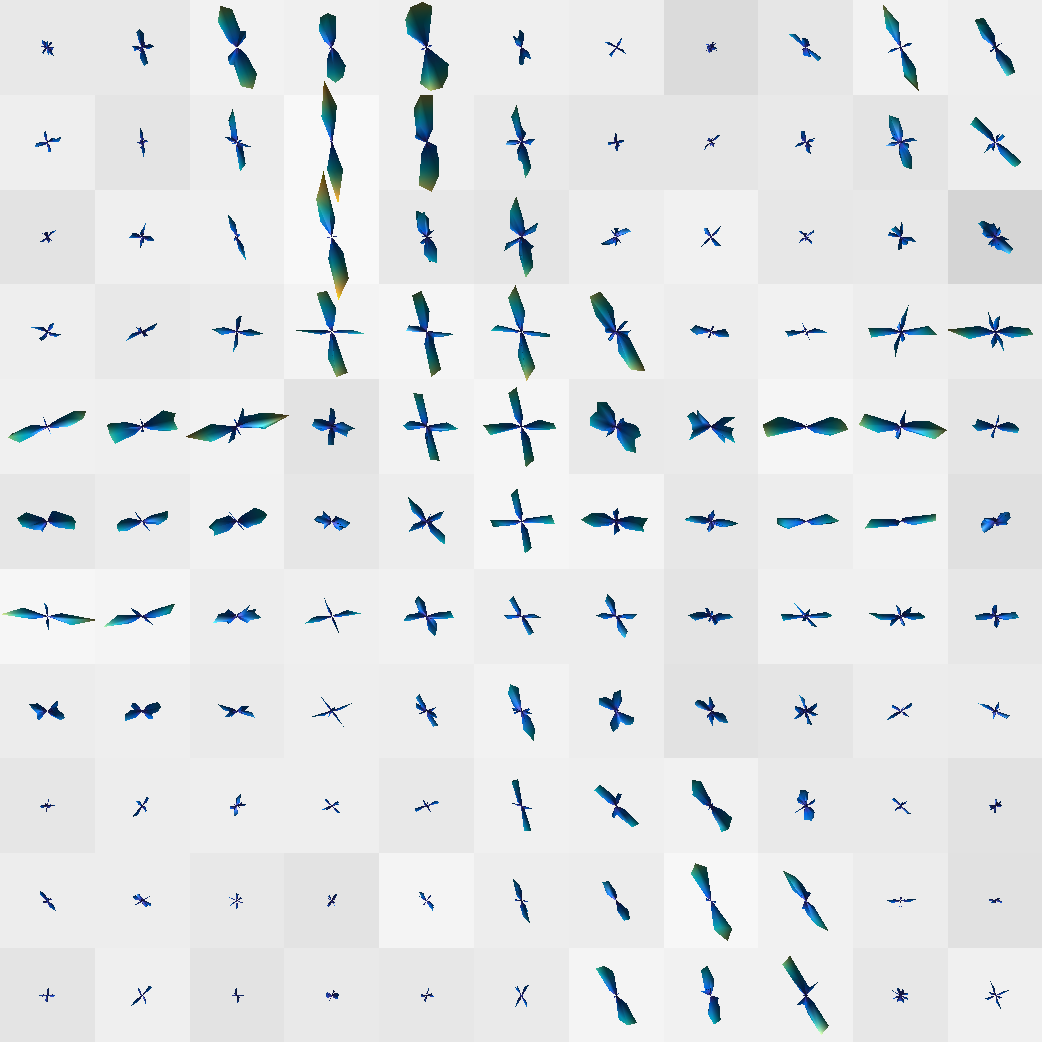}};
    \node [rotate=90,left=of l2 cross1.west,anchor=south] {CSD};

    \node (l2+spatial cross1) [below=of l2 cross1.south,anchor=north]
    {\includegraphics[width=.3\textwidth]{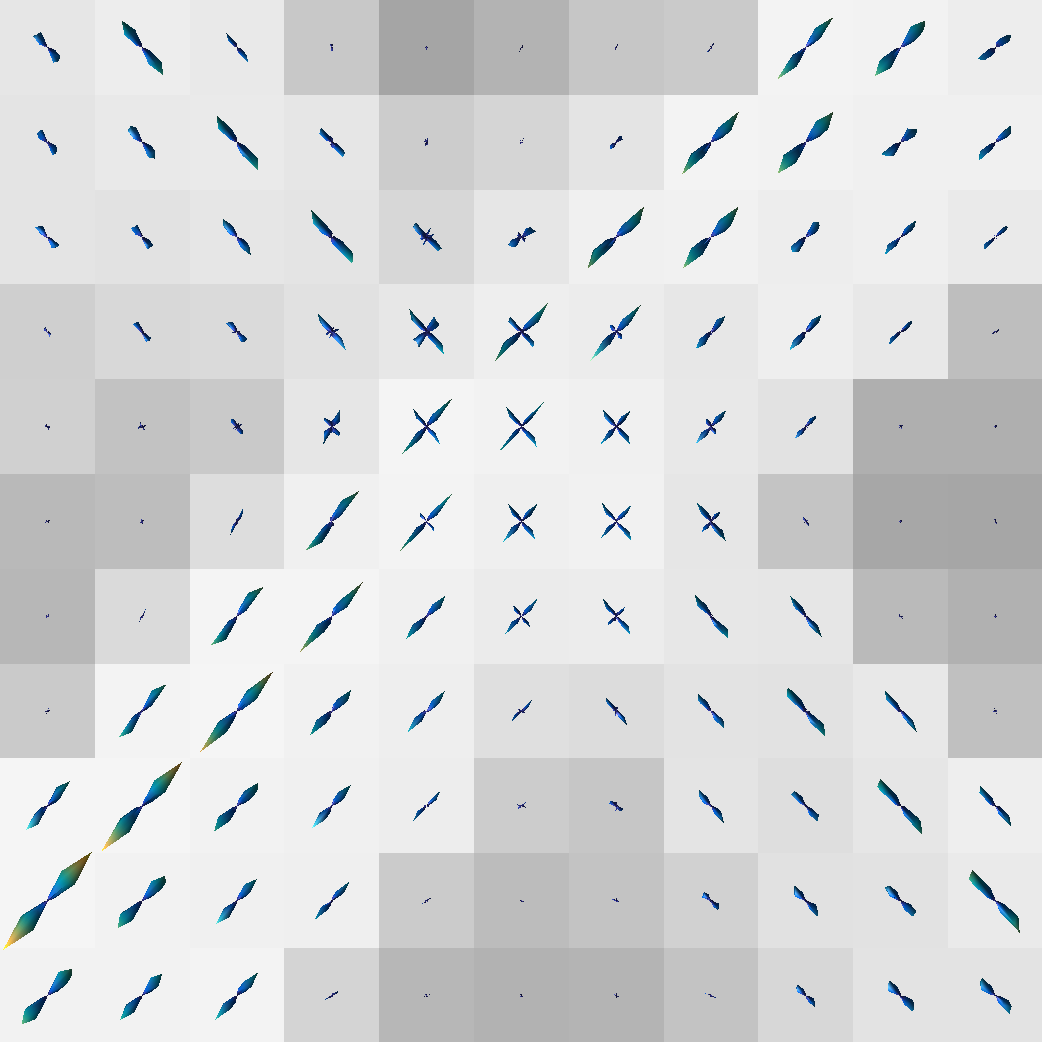}};
    \node (l2+spatial curve) [right=of l2+spatial cross1.east,anchor=west]
    {\includegraphics[width=.3\textwidth]{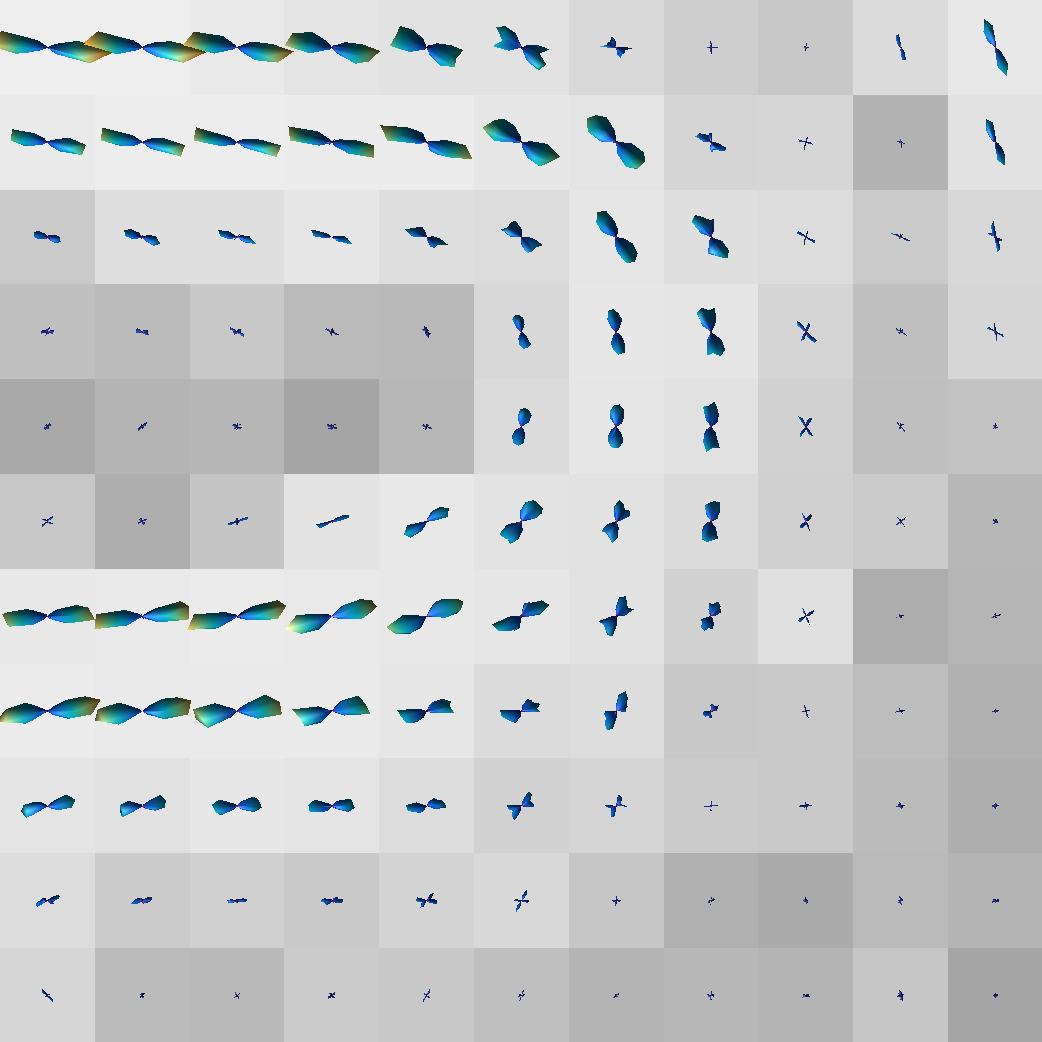}};
    \node (l2+spatial cross2) [right=of l2+spatial curve.east,anchor=west]
    {\includegraphics[width=.3\textwidth]{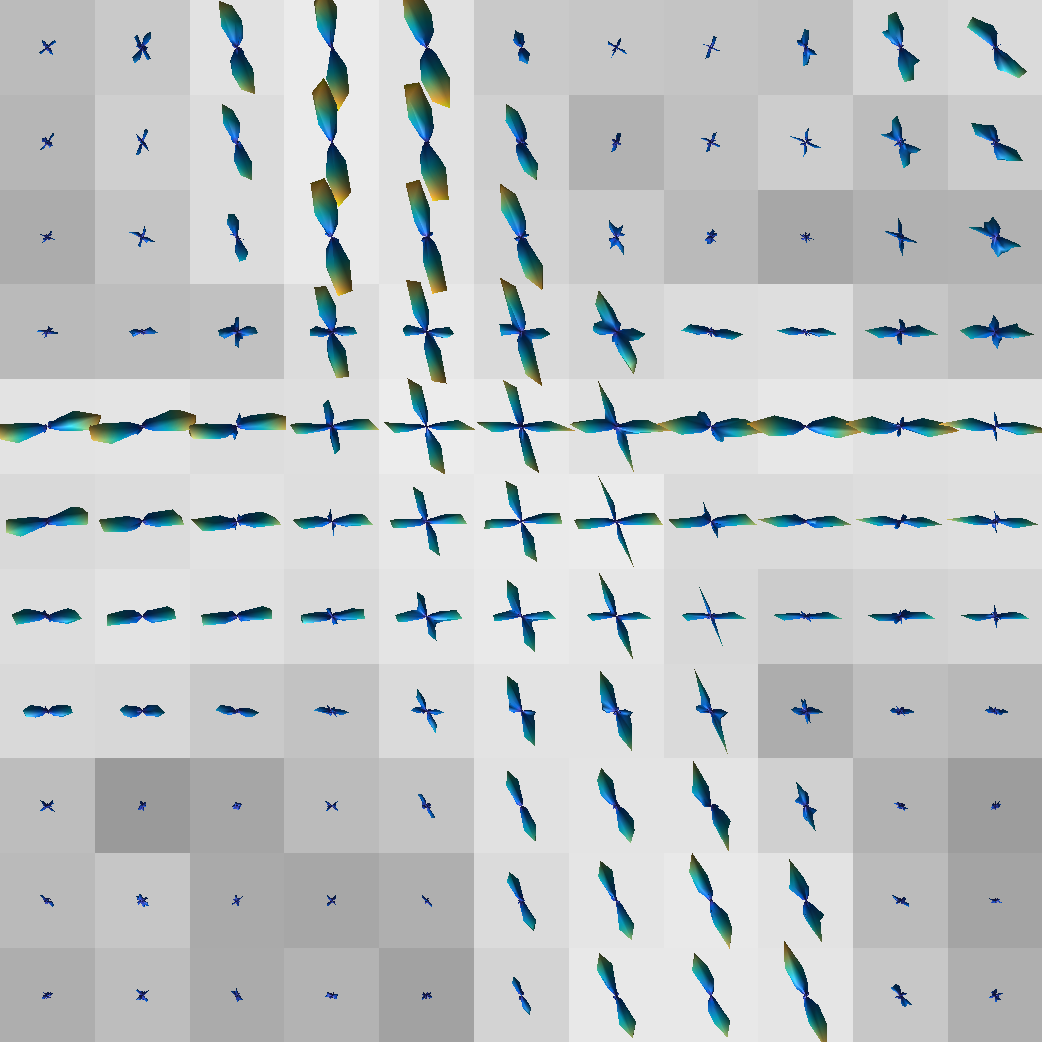}};
    \node [rotate=90,left=of l2+spatial cross1.west,anchor=south] {CSD-FC};

    \node (angular+spatial cross1) [below=of l2+spatial cross1.south,anchor=north]
    {\includegraphics[width=.3\textwidth]{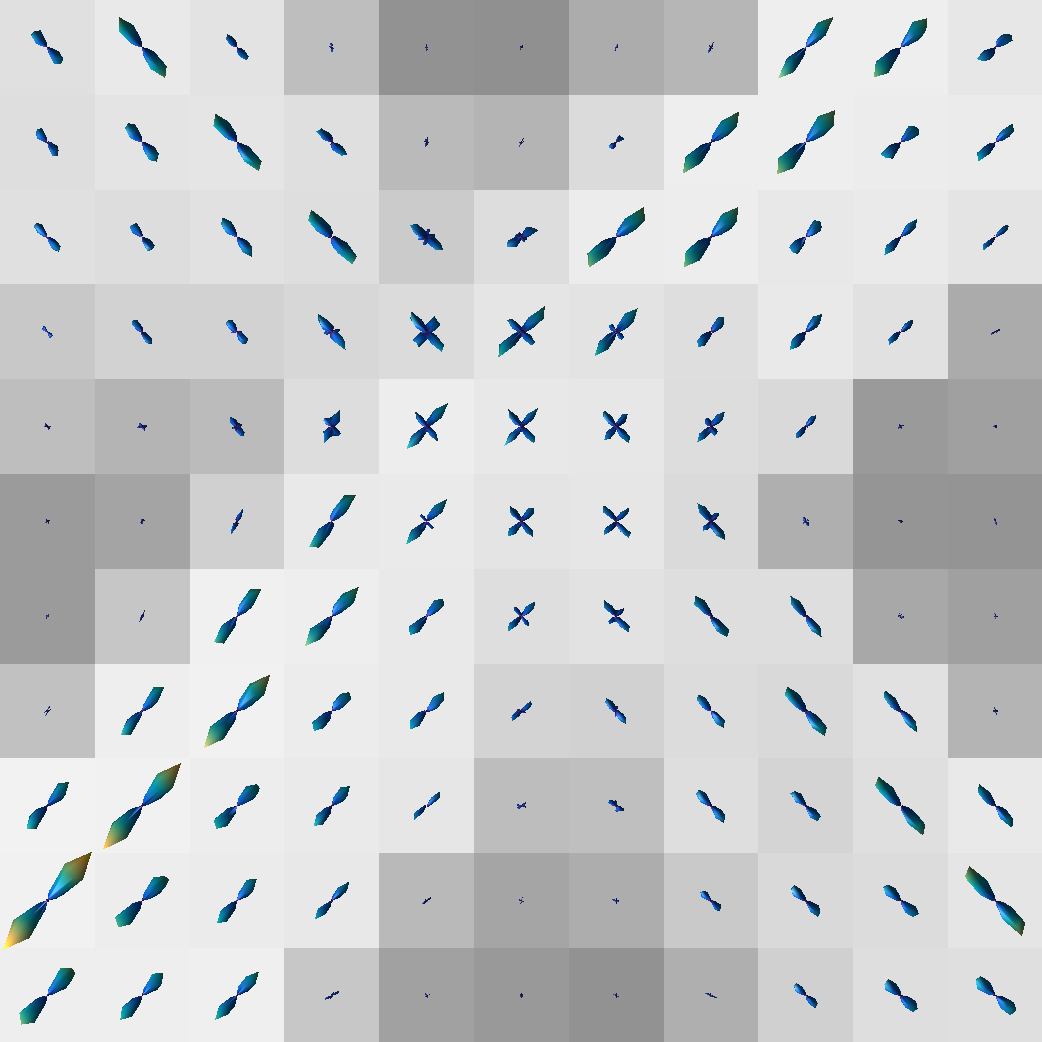}};
    \node (angular+spatial curve) [right=of angular+spatial cross1.east,anchor=west]
    {\includegraphics[width=.3\textwidth]{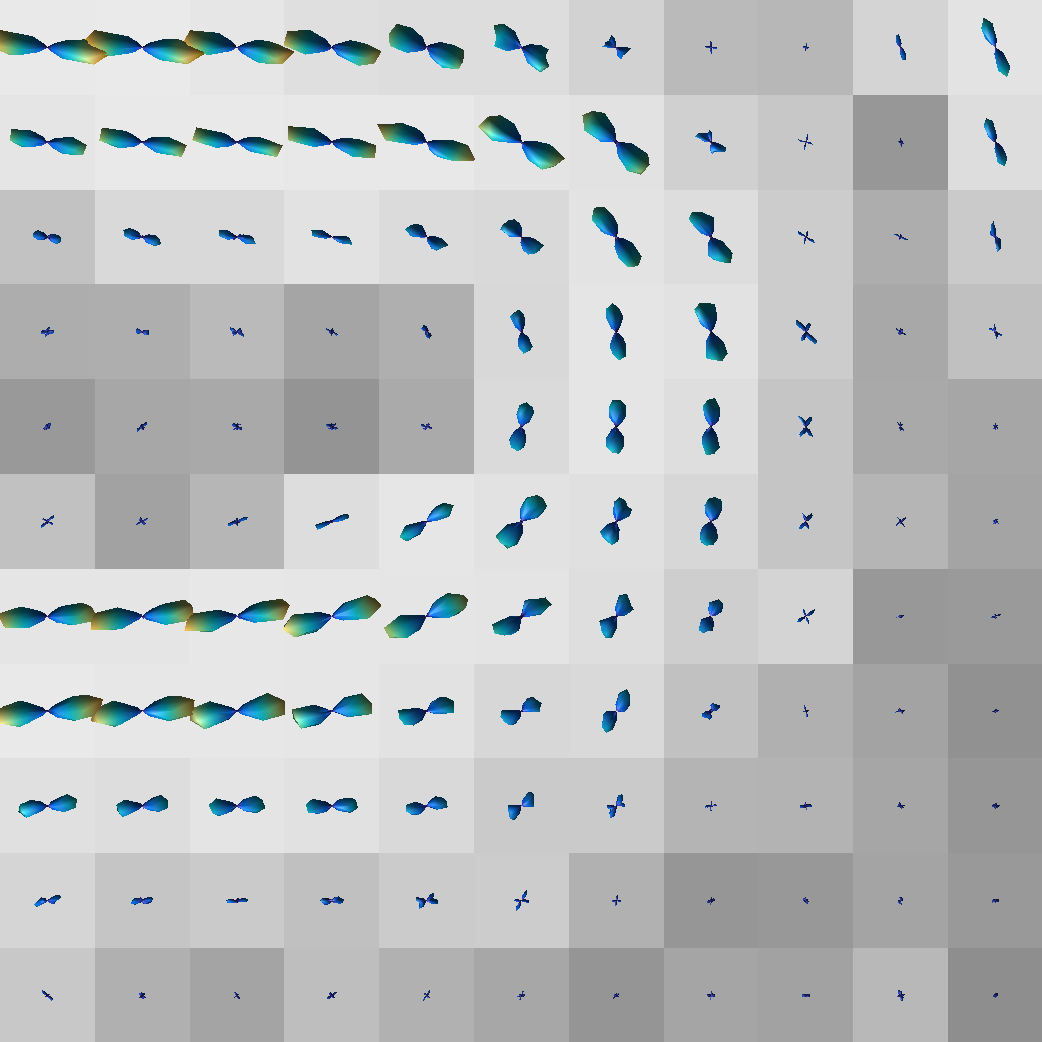}};
    \node (angular+spatial cross2) [right=of angular+spatial curve.east,anchor=west]
    {\includegraphics[width=.3\textwidth]{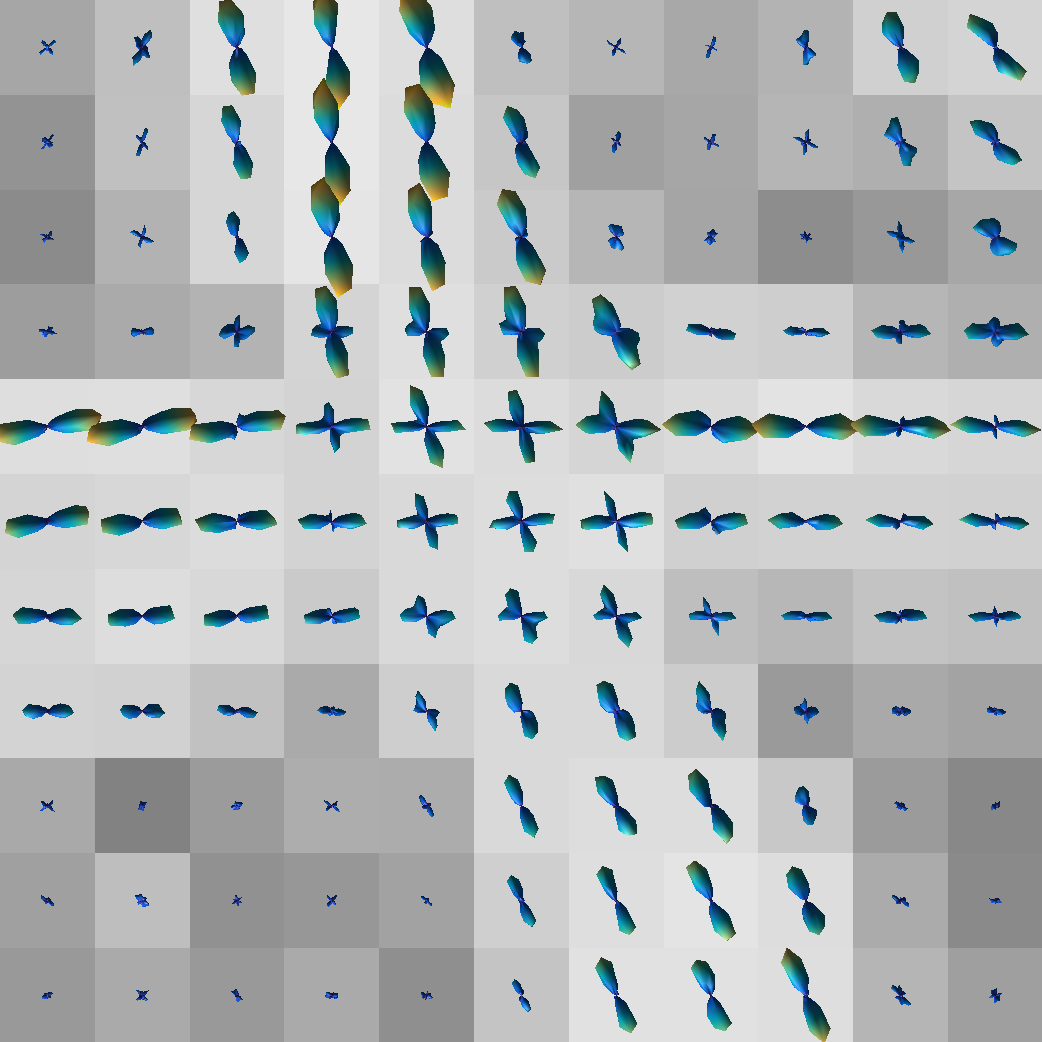}};
    \node [rotate=90,left=of angular+spatial cross1.west,anchor=south] {CSD-FC+LB};

    \node (iso cross1) [below=of angular+spatial cross1.south,anchor=north]
    {\includegraphics[width=.3\textwidth]{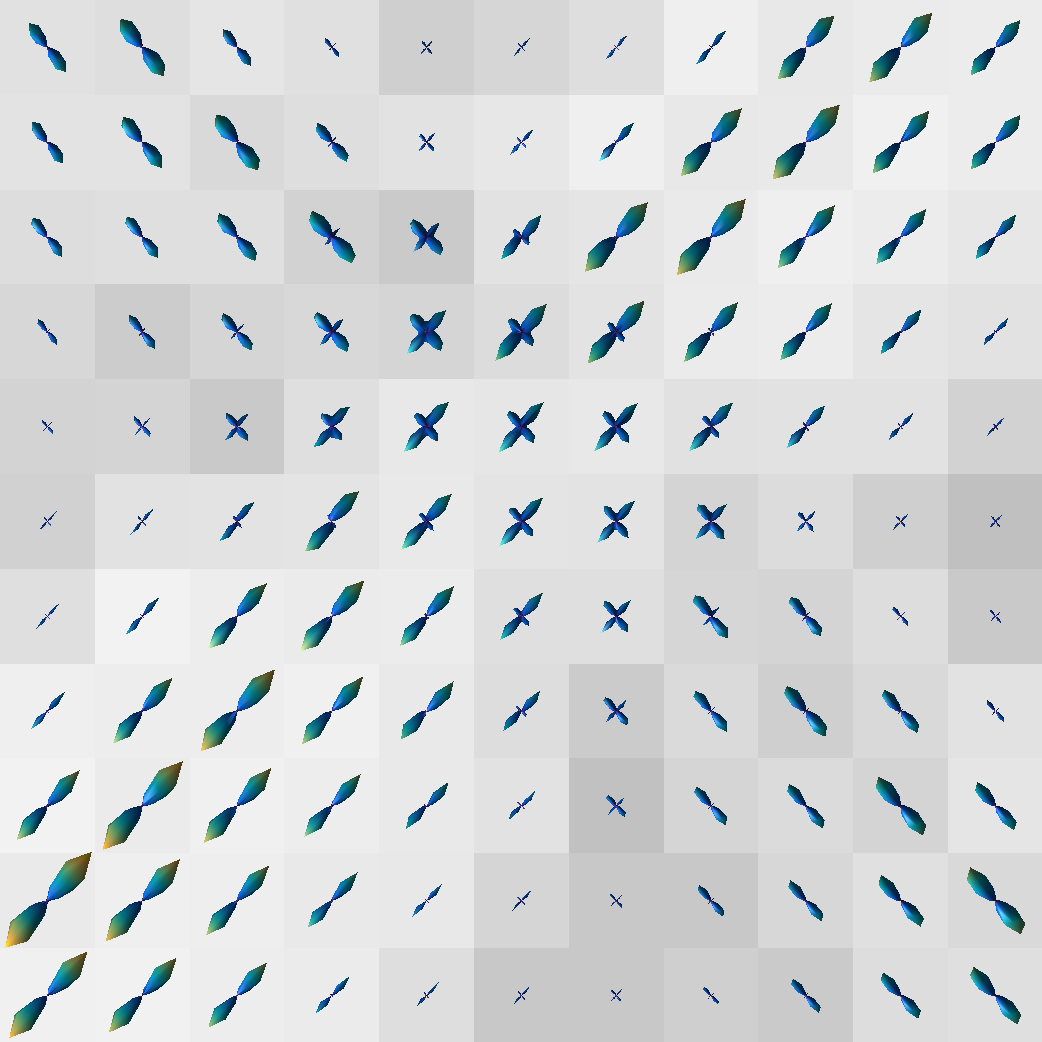}};
    \node (iso curve) [right=of iso cross1.east,anchor=west]
    {\includegraphics[width=.3\textwidth]{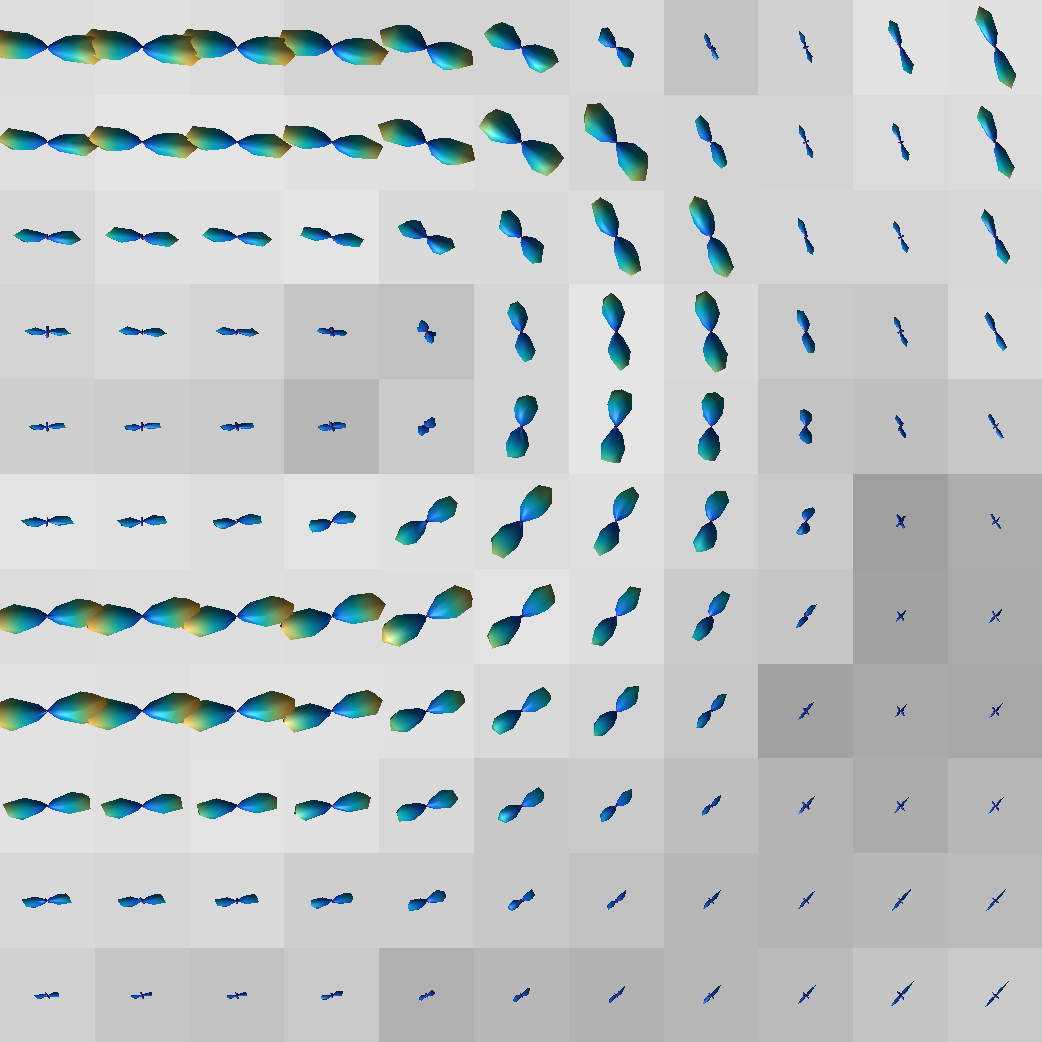}};
    \node (iso cross2) [right=of iso curve.east,anchor=west]
    {\includegraphics[width=.3\textwidth]{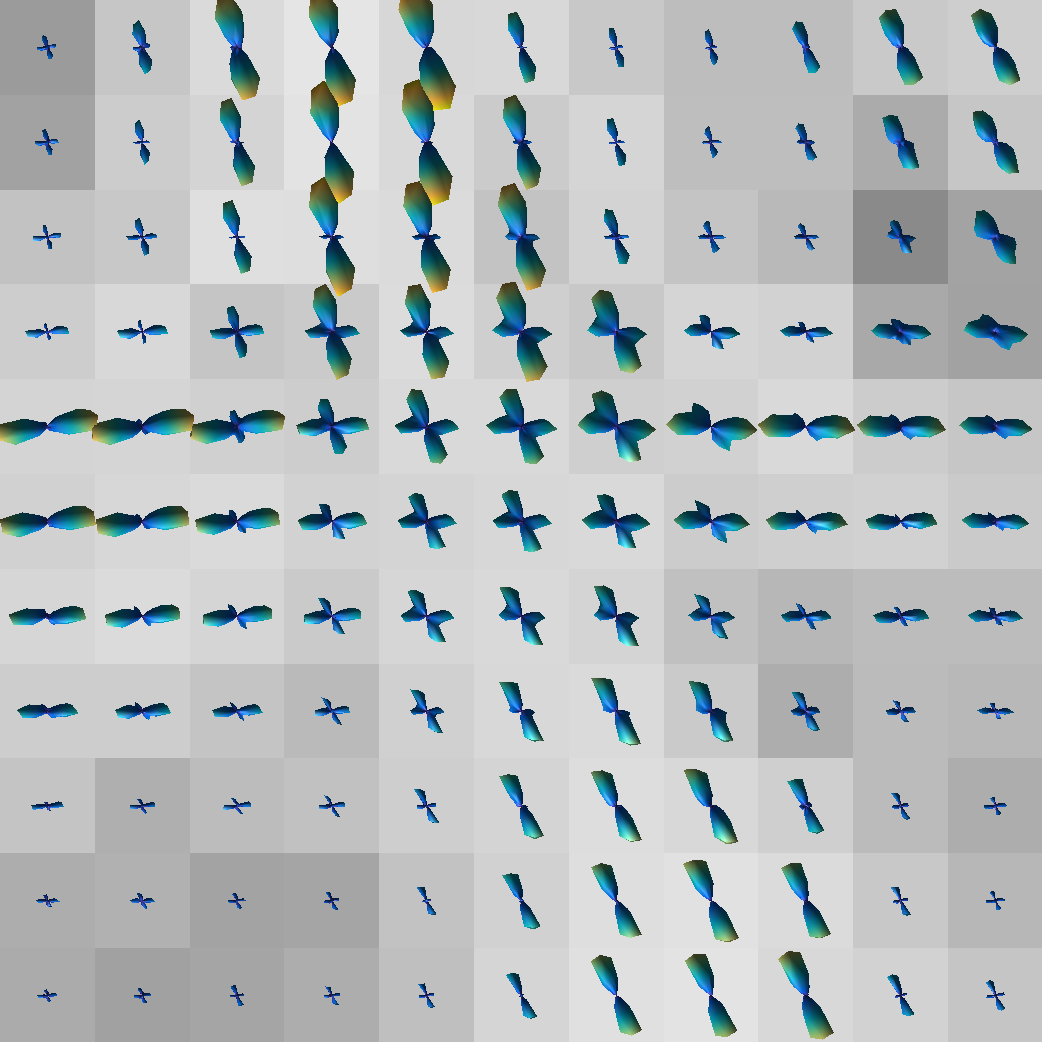}};
    \node [rotate=90,left=of iso cross1.west,anchor=south] {CSD-iso};
  \end{tikzpicture}
  \caption{Performance of different regularization strategies on the highlighted regions in figure~\ref{fig:fc}. From top to bottom: (1) Constrained Spherical Deconvolution, (2) CSD with FC penalty (3), CSD with FC and Laplace-Beltrami penalty, (4) CSD with isotropic spatial penalty. The background image shows the (generalized) FA.}\label{fig:fc-compare}
\end{figure}

The \emph{in vivo} data set was taken from the Human Connectome Project (HCP) database.\footnote{See \url{https://ida.loni.usc.edu} and the \emph{Acknowledgments} section.} The data set consists of each 90 diffusion weighted images for \(b\)-values \(1000\), \(2000\) and \(\SI{3000}{s/mm^2}\). Of these, we only used the \(b = \SI{2000}{s/mm^2}\) points. As the data set is intended primarily for brain research, not for evaluating reconstruction and tracking methods, its SNR is rather high, so that even unregularized reconstructions show a good spatial coherence. Still, it is interesting to test the performance the CSD-FC method on this data set, in particular to see how well the geometric assumption of locally straight fibers is fulfilled in a realistic situation. Therefore, the spatial regularization parameter was deliberately chosen rather large to highlight some of its strengths and shortcomings.

Reconstructions were performed on a \(30 \times 30 \times 35\) voxel subset of the volume around the area shown in figure~\ref{fig:hcp}. We parametrized the convolution kernel as an exponential function, \(k(t) \simeq \exp(-\alpha t^2)\), with parameters obtained from a previous DTI reconstruction by averaging the sorted eigenvalues of all tensors with fractional anisotropy (FA) exceeding a certain threshold and taking the mean value of the smaller two eigenvalues in order to obtain an axially symmetric kernel. Instead of an unregularized reconstruction for comparison, we used a small \(L^2\) penalty. Visual inspection showed that this did not have significant impact on the resolved structures. However, it increased FA contrast by reducing noise in empty areas and, more importantly, lead to ODFs that are slightly broader and can therefore be depicted in printed plots more clearly.

\begin{figure}
  \centering
  \includegraphics[width=.35\textwidth]{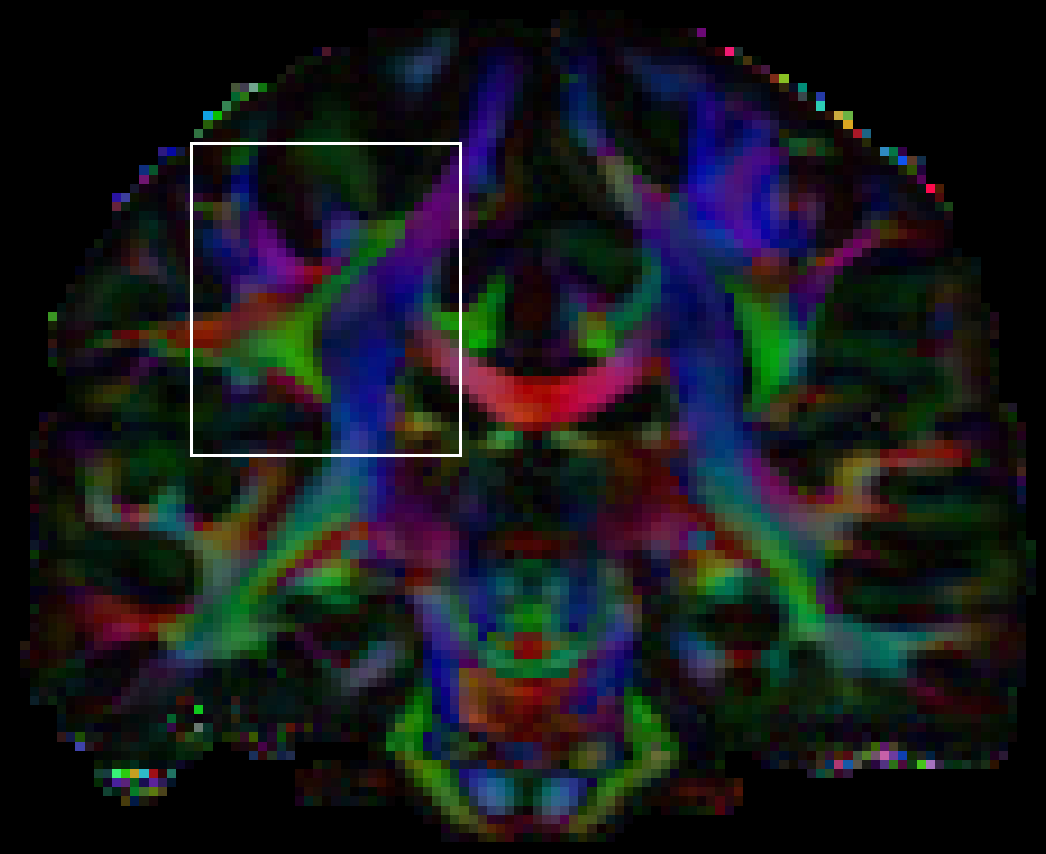}
  \caption{Subregion of the \emph{in vivo} on which reconstructions were performed (the image shows FA color coded by main diffusion direction). The depicted slice is the one in figure~\ref{fig:hcp08}, while figure~\ref{fig:hcp20} is a bit closer to the anterior.}\label{fig:hcp}
\end{figure}

Reconstruction results for two slices are shown in figures~\ref{fig:hcp08} and~\ref{fig:hcp20}. For brevity's sake, we only compare the \(L^2\)-penalized reconstruction to the CSD-FC+LB method.

\begin{figure}
  \centering
  \begin{tikzpicture}[node distance=0.5em,inner sep=0pt]
    \node (l2)
    {\includegraphics[height=17.5em]{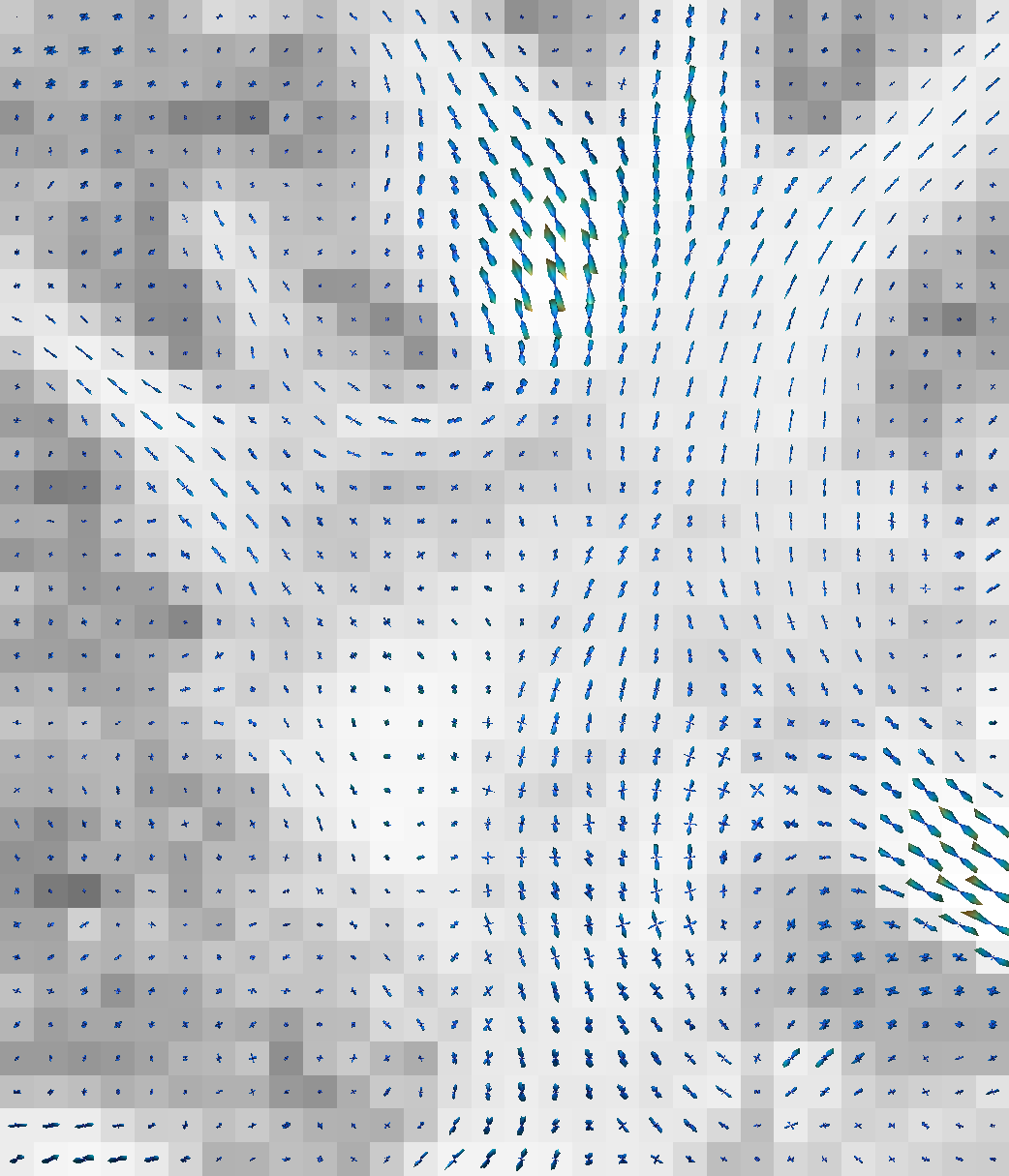}};

    \node [right=of l2.north east,anchor=north west]
    {\includegraphics[height=8.5em]{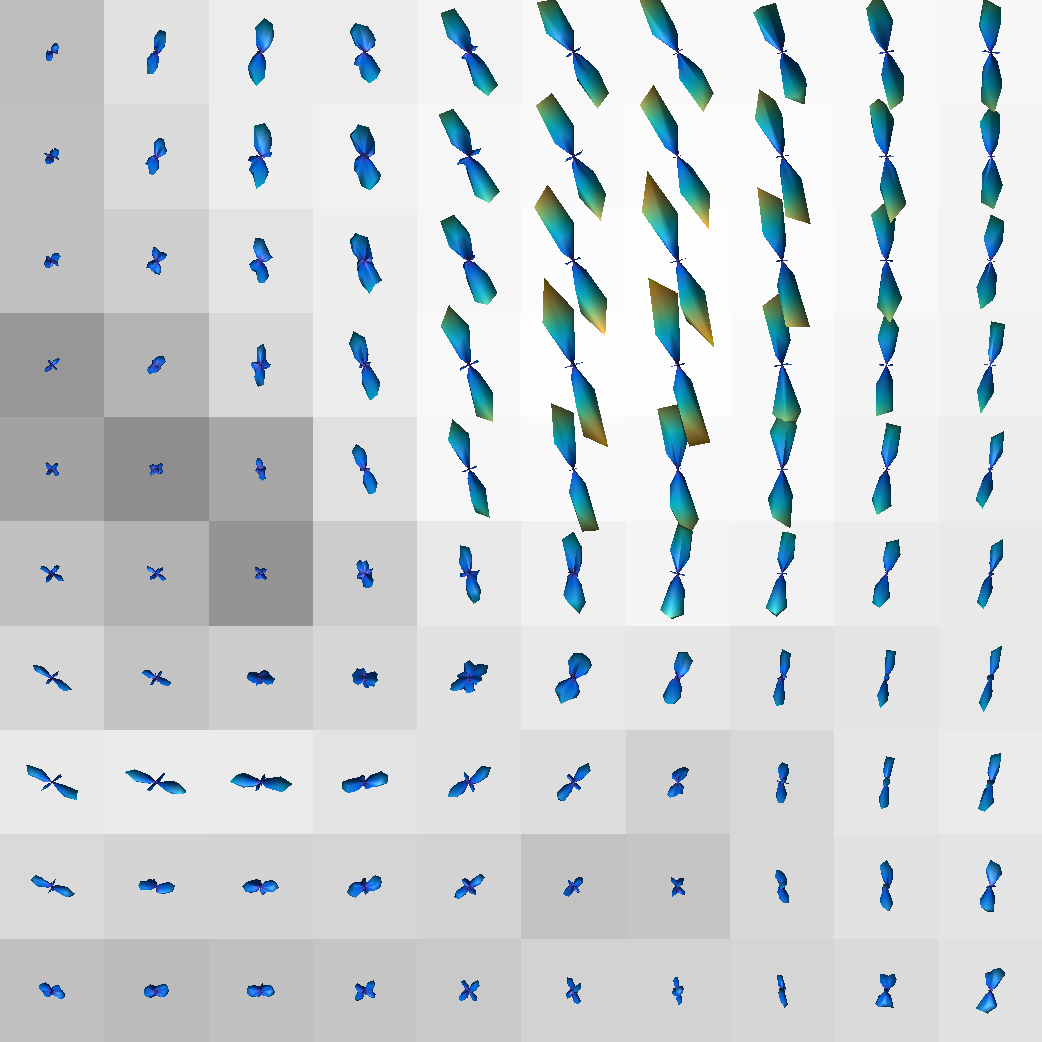}};
    \draw [thick] ([xshift=5em,yshift=10em] l2.south west) rectangle +(5em,5em);

    \node [right=of l2.south east,anchor=south west]
    {\includegraphics[height=8.5em]{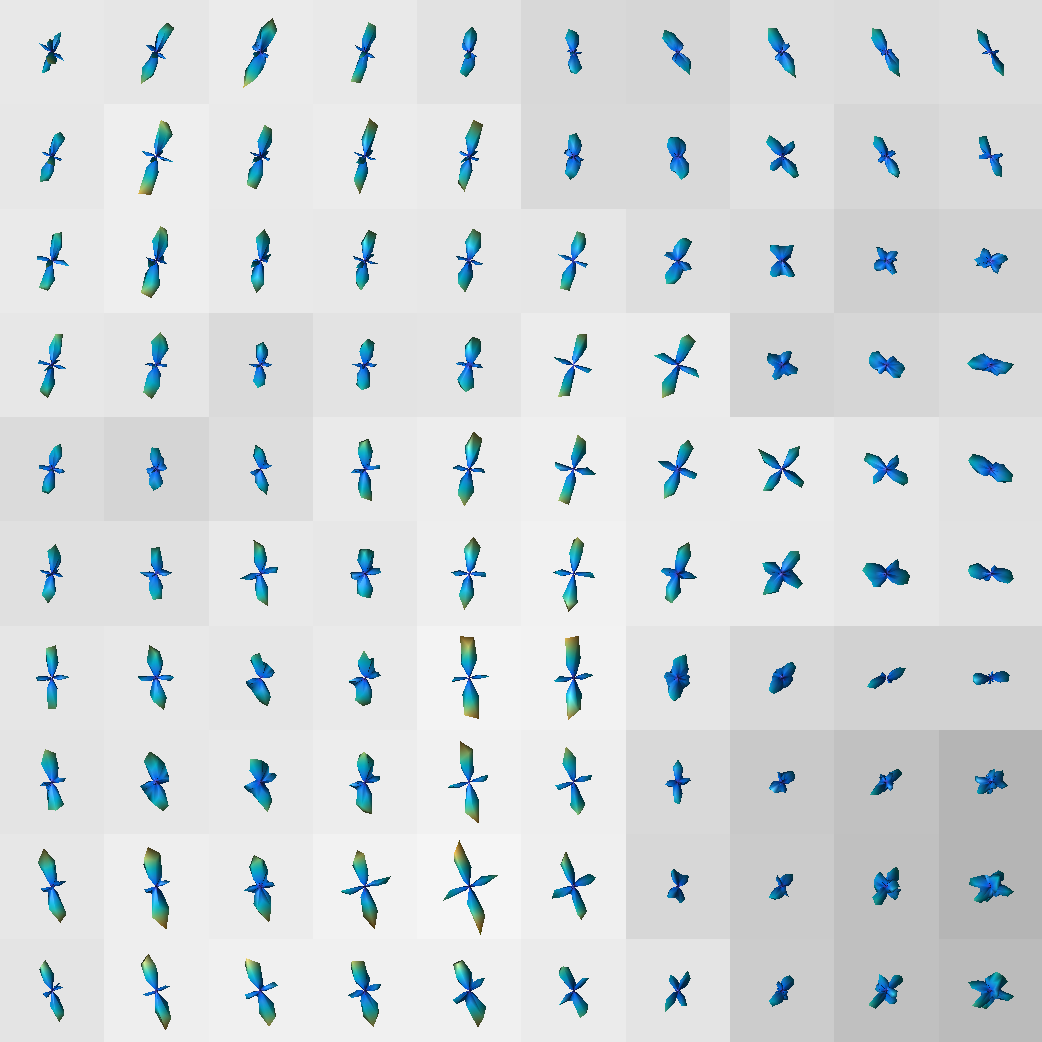}};
    \draw [thick] ([xshift=7.5em,yshift=3em] l2.south west) rectangle +(5em,5em);

    \node [rotate=90,left=of l2.west,anchor=south] {CSD-\(L^2\)};

    \node [below=of l2.south,anchor=north] (spatial)
    {\includegraphics[height=17.5em]{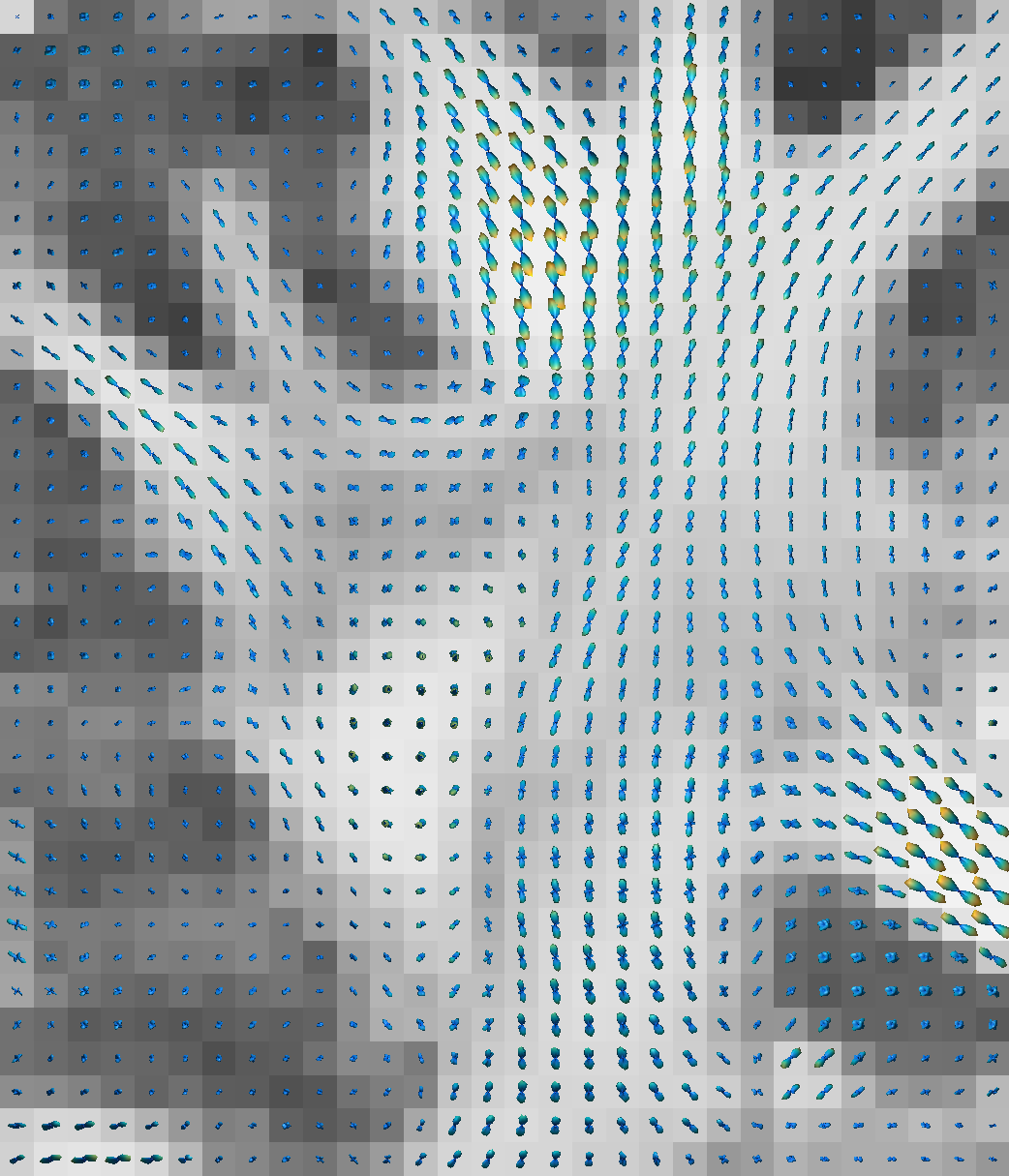}};

    \node [right=of spatial.north east,anchor=north west]
    {\includegraphics[height=8.5em]{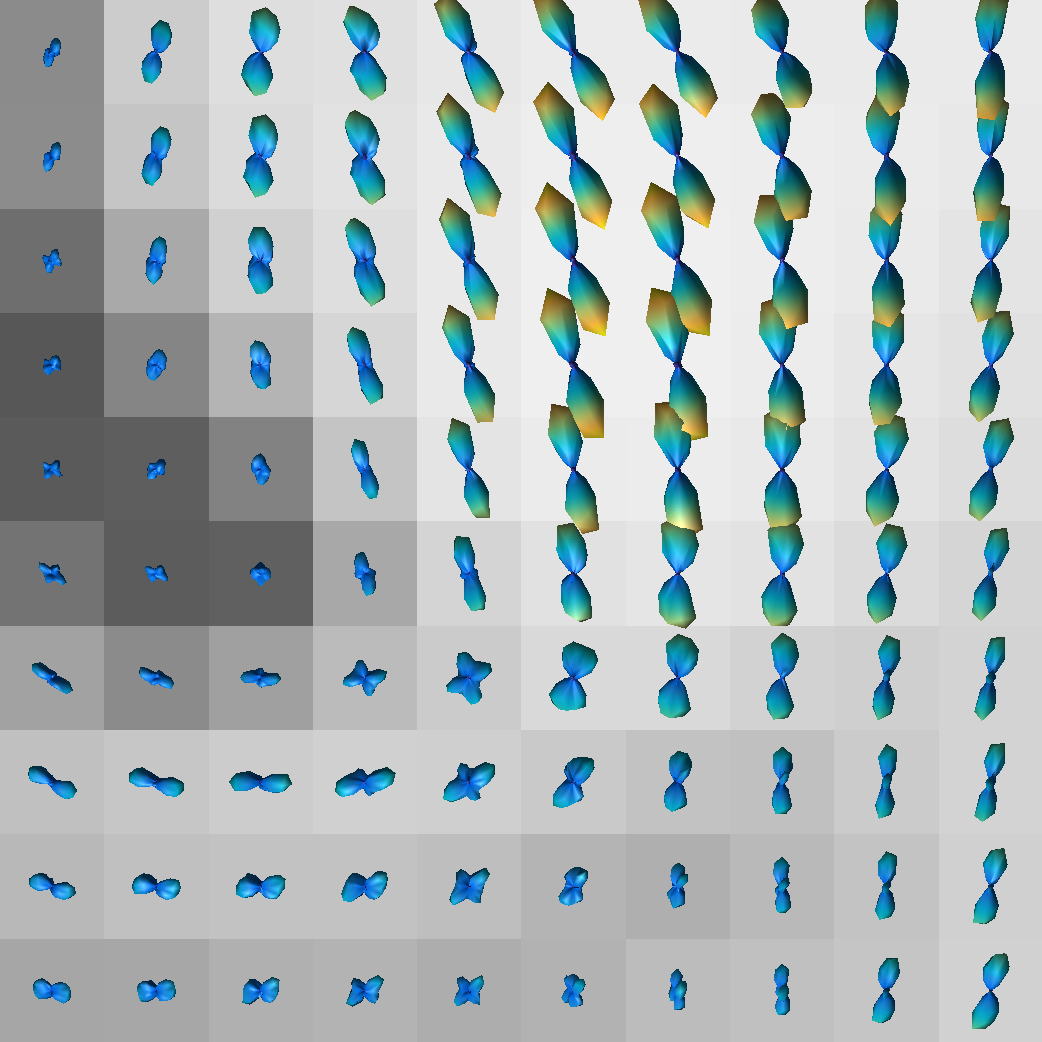}};
    \draw [thick] ([xshift=5em,yshift=10em] spatial.south west) rectangle +(5em,5em);

    \node [right=of spatial.south east,anchor=south west]
    {\includegraphics[height=8.5em]{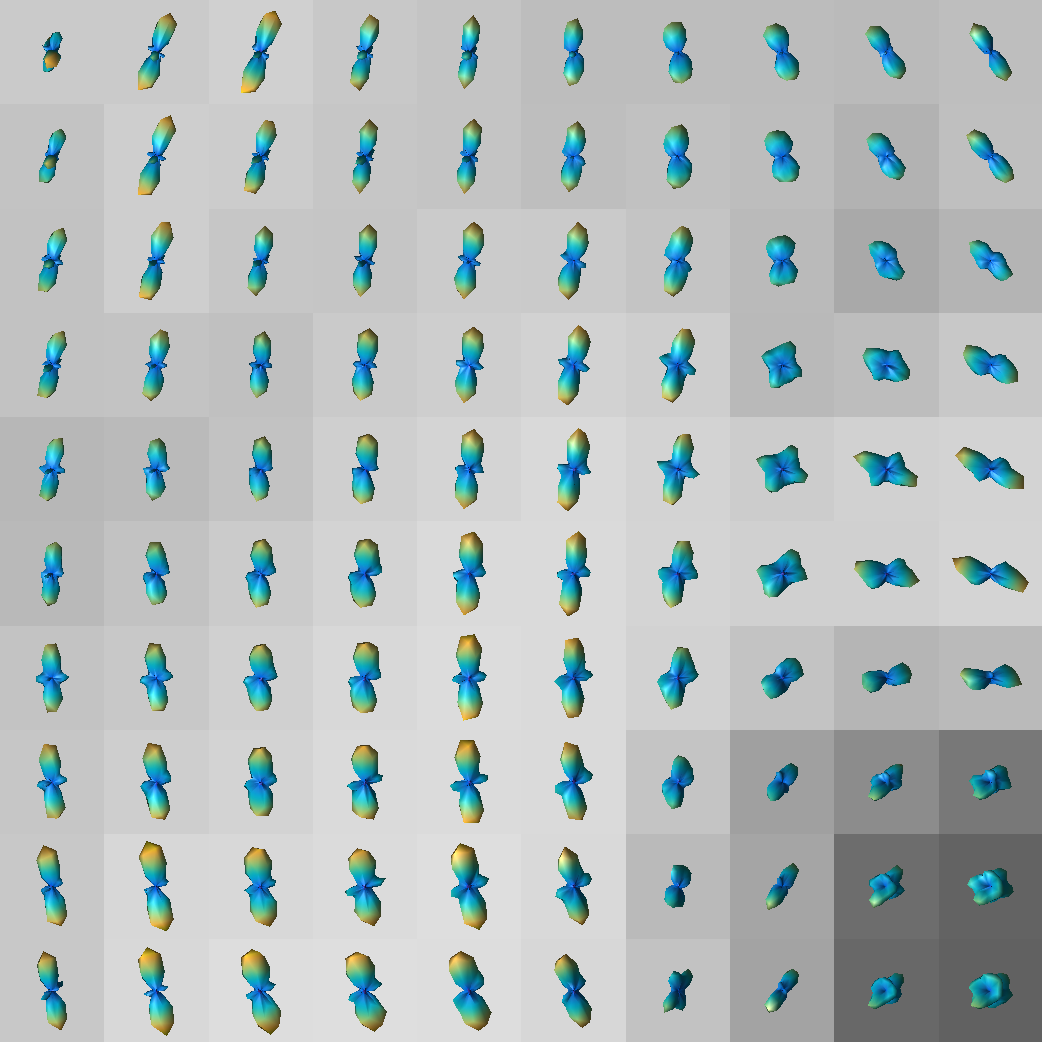}};
    \draw [thick] ([xshift=7.5em,yshift=3em] spatial.south west) rectangle +(5em,5em);

    \node [rotate=90,left=of spatial.west,anchor=south] {CSD-FC+LB};
  \end{tikzpicture}
  \caption{Comparison of CSD with \(L^2\) penalty (top) to CSD with FC and Laplace-Beltrami penalty (bottom) for \emph{in vivo} data shown in Figure~\ref{fig:hcp}.}\label{fig:hcp08}
\end{figure}

\begin{figure}
  \centering
  \begin{tikzpicture}[node distance=0.5em,inner sep=0pt]
    \node (l2)
    {\includegraphics[height=17.5em]{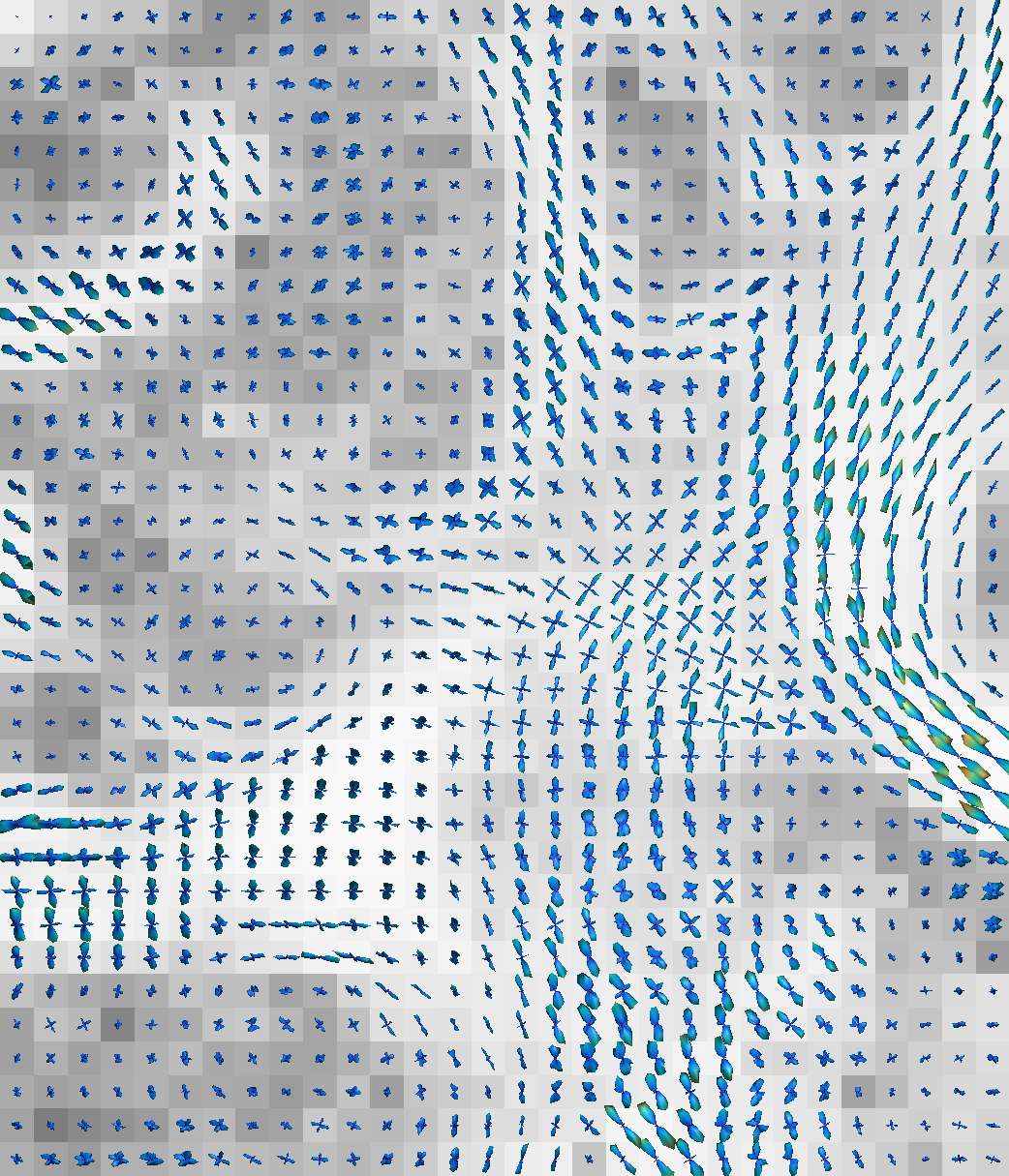}};

    \node [right=of l2.north east,anchor=north west]
    {\includegraphics[height=8.5em]{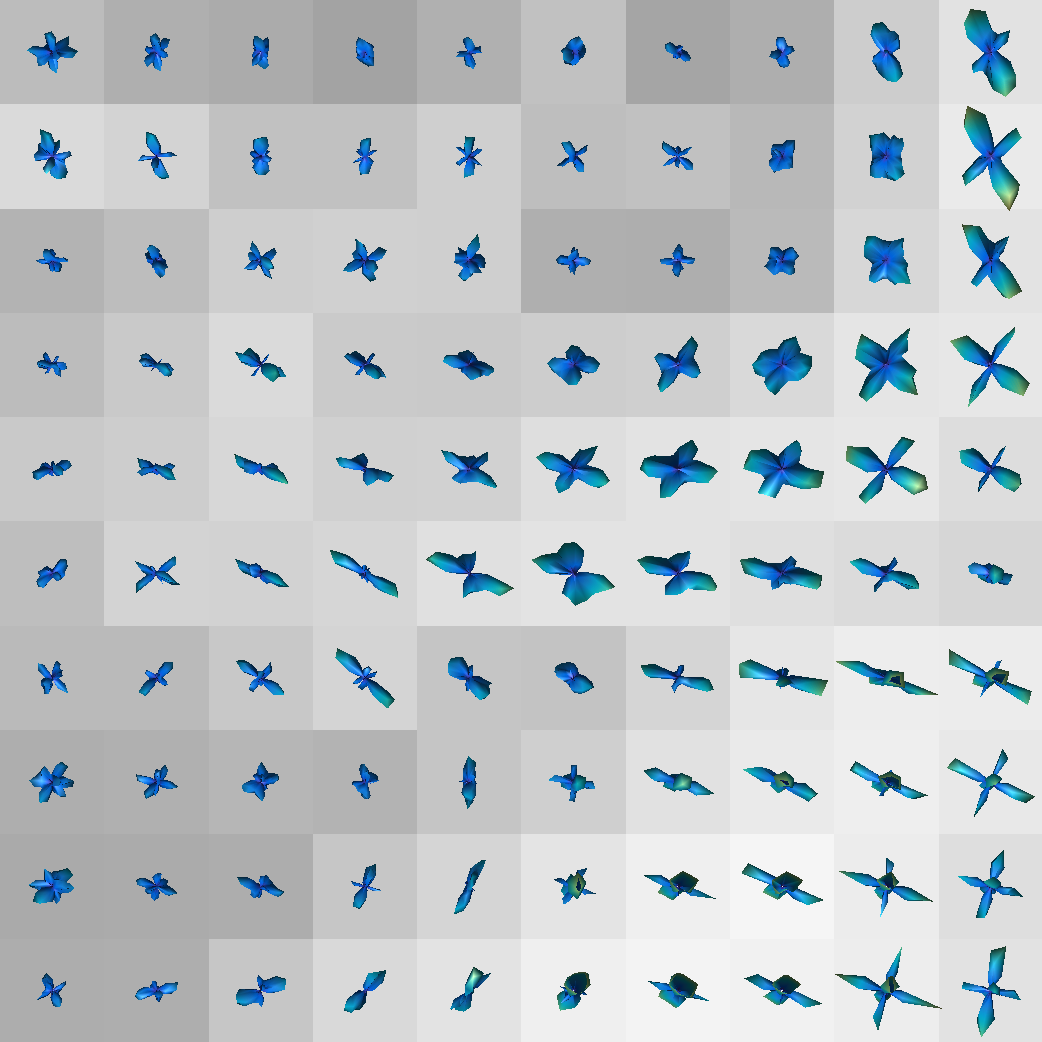}};
    \draw [thick] ([xshift=3em,yshift=7em] l2.south west) rectangle +(5em,5em);

    \node [right=of l2.south east,anchor=south west]
    {\includegraphics[height=8.5em]{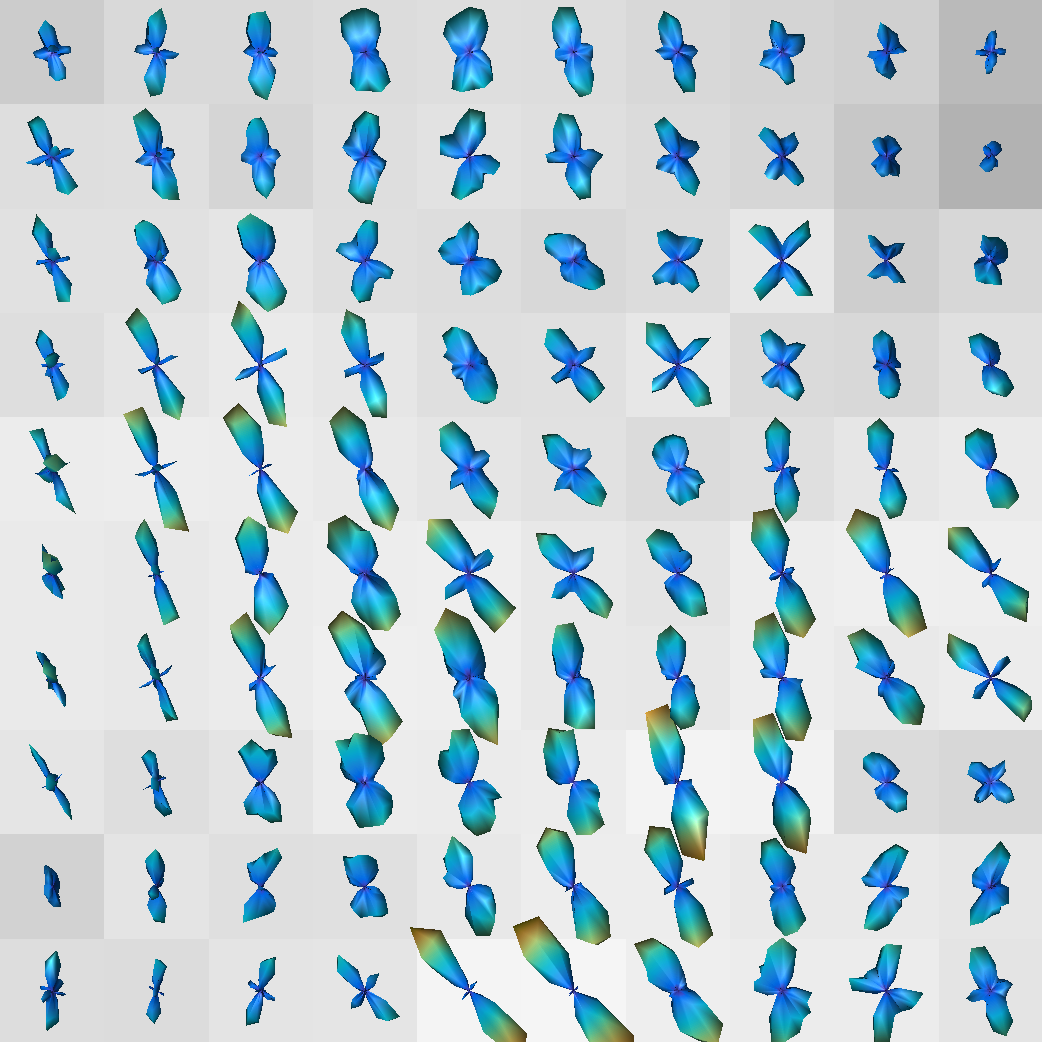}};
    \draw [thick] ([xshift=7em,yshift=0.5em] l2.south west) rectangle +(5em,5em);

    \node [rotate=90,left=of l2.west,anchor=south] {CSD-\(L^2\)};

    \node [below=of l2.south,anchor=north] (spatial)
    {\includegraphics[height=17.5em]{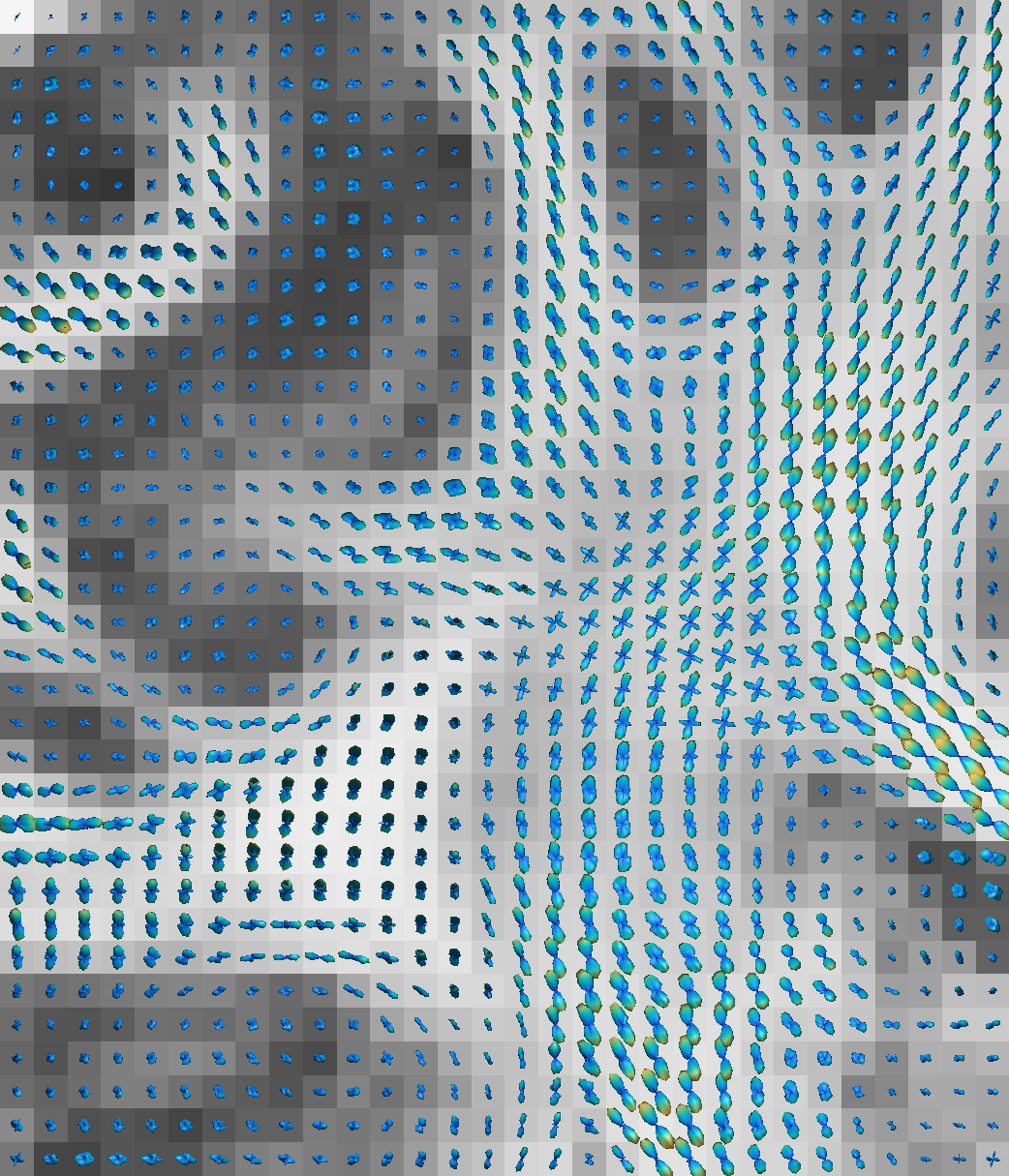}};

    \node [right=of spatial.north east,anchor=north west]
    {\includegraphics[height=8.5em]{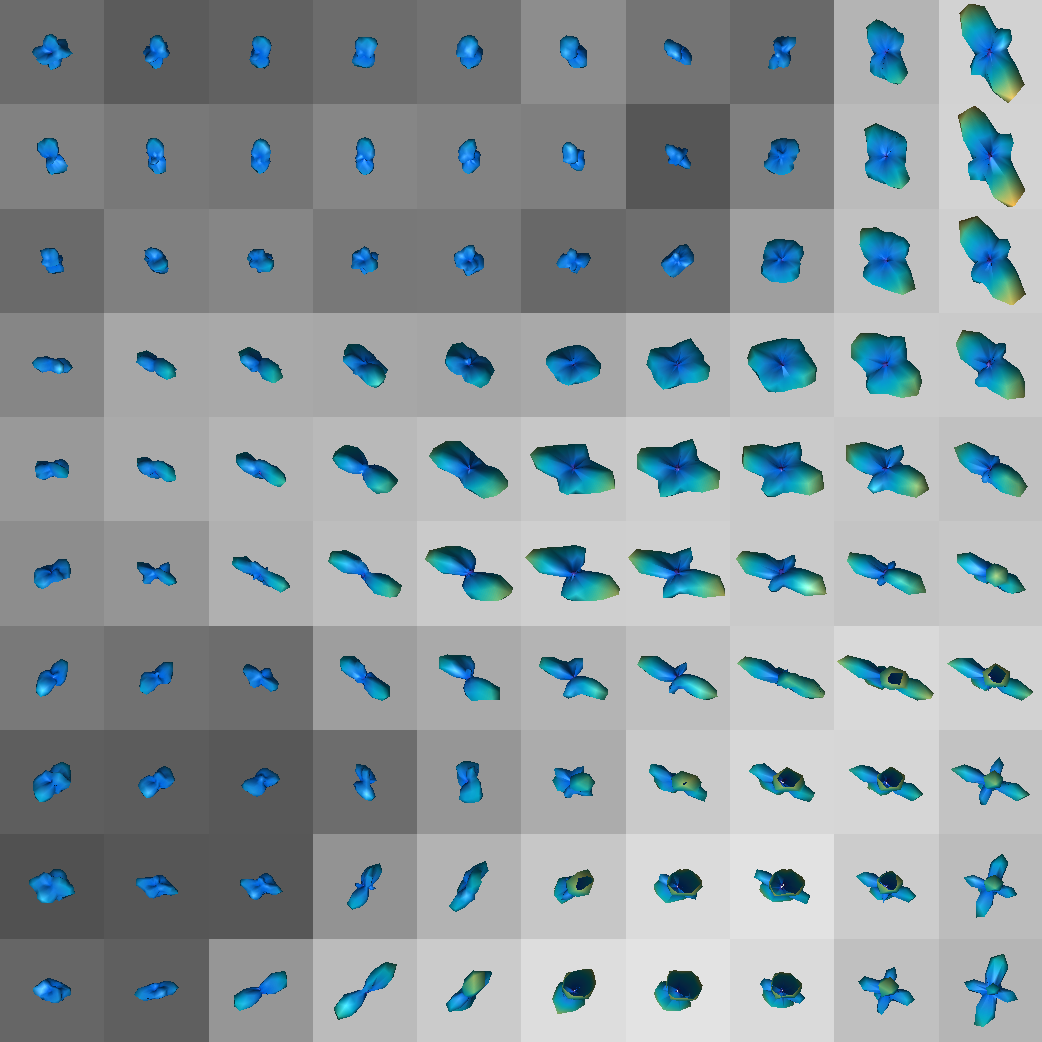}};
    \draw [thick] ([xshift=3em,yshift=7em] spatial.south west) rectangle +(5em,5em);

    \node [right=of spatial.south east,anchor=south west]
    {\includegraphics[height=8.5em]{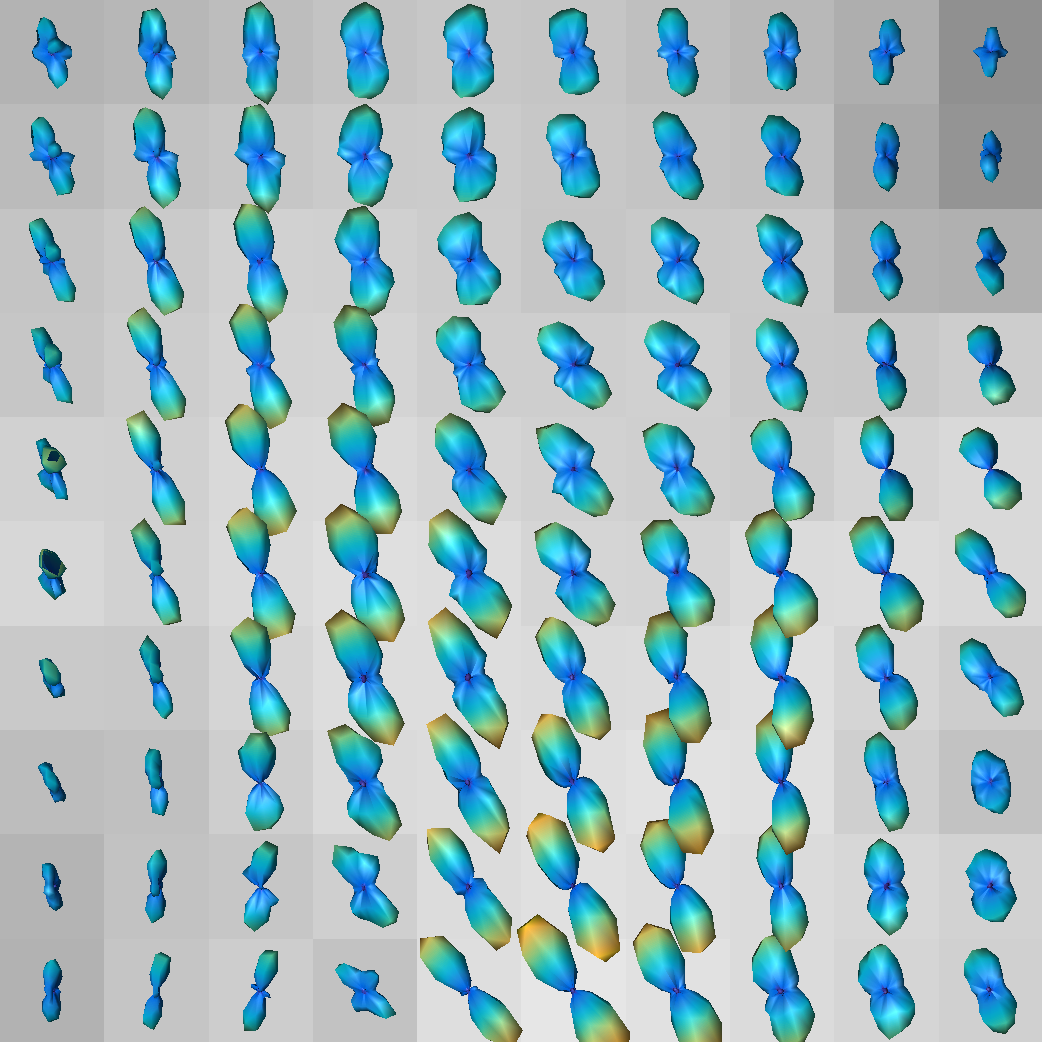}};
    \draw [thick] ([xshift=7em,yshift=0.5em] spatial.south west) rectangle +(5em,5em);

    \node [rotate=90,left=of spatial.west,anchor=south] {CSD-FC+LB};
  \end{tikzpicture}
  \caption{Comparison of CSD with \(L^2\) penalty (top) to CSD with FC and Laplace-Beltrami penalty (bottom) for \emph{in vivo} data shown in Figure~\ref{fig:hcp}.}\label{fig:hcp20}
\end{figure}

The top of figure~\ref{fig:hcp08} shows a rather complicated ``kissing'' fiber structure. As in the phantom data set above, the FC penalty tends to introduce spurious crossings in the sharply bent part, which can also be seen in the top highlighted region. The bottom highlighted region counter-intuitively shows a \emph{reduced} quality of the resolved crossing structures. We observed the same effect with spatial regularization alone, without the Laplace-Beltrami penalty. The reason for this observation is not clear.

For the slice in figure~\ref{fig:hcp20}, the highlighted region at the bottom shows a straight elongated structure, the coherence of which is significantly improved by the FC penalty. The other region shows a white matter structure extending into a gray matter area, which may or may not be an artifact due to the general tendency of the penalty to prolong elongated structures. In cases like these, validation based on other sources of knowledge about the local structures is necessary.

A notable feature of the regularized reconstructions is the significantly improved FA contrast between gray and white matter. In the figures shown, this might be attributed to the Laplace-Beltrami penalty. However, we observed the same effect with spatial regularization alone, i.e.\ the spatial penalty is able to distinguish between noisy and oriented structures using the fact that the former are not coherent with their surroundings.

\section{Discussion}

In this work, we proved the convergence of fiber continuity based spatial regularization for discrete, noisy data by showing that the natural Sobolev-type space for this regularization is compactly embedded in \(L^2\). This allows the approximation of the otherwise non-compact forward operator by finite dimensional operators.

We presented some numerical examples illustrating the performance of the method. They show a significantly more coherent ODF field compared to unregularized reconstructions, the potential to resolve crossing structures even with poor signal to noise ratio, as well as better noise suppression in isotropic areas. It is, however, not generally clear in all cases whether the observed structures are artifacts, e.g.\ spurious crossings or structures being extended into isotropic regions. Suppression or blurring of existing structures by the method may also occur. In these cases, validation of the results using other methods would be useful. In particular, tractography results from regularized reconstructions would be interesting, as they allow for comparison with existing knowledge about the structure of the brain, in particular when analyzing the widely used HCP data set.

As was already found in~\cite{reisert-kiselev}, the approach shows characteristic weaknesses in curved structures where the smoothness assumption becomes invalid. Both the theoretical results and numerical experiments suggest that the ODF reconstructions can be improved in these areas by inclusion of an additional penalty enforcing angular smoothness. The induced blurring, however, limits angular resolution, which is undesirable for crossings at acute angles. A way out of this may be locally adaptive choices of regularization parameters.

\section*{Acknowledgments}

Funding was provided by the DFG Research Training Group 1023 ``Identification in Mathematical Models''.

We would like to thank Jens Frahm and Sabine Hofer (Biomedizinische NMR Forschungs GmbH, Göttingen) for helpful discussions.

Data used in the preparation of this work were obtained from the Human Connectome Project (HCP) database. The HCP project (Principal Investigators: Bruce Rosen, M.D., Ph.D., Martinos Center at Massachusetts General Hospital; Arthur W. Toga, Ph.D., University of Southern California, Van J. Weeden, MD, Martinos Center at Massachusetts General Hospital) is supported by the National Institute of Dental and Craniofacial Research (NIDCR), the National Institute of Mental Health (NIMH) and the National Institute of Neurological Disorders and Stroke (NINDS). HCP is the result of efforts of co-investigators from the University of Southern California, Martinos Center for Biomedical Imaging at Massachusetts General Hospital (MGH), Washington University, and the University of Minnesota.

\afterpage{\clearpage}

\end{document}